\DeclareFontFamily{OT1}{wncyr}{\hyphenchar\font45}
\DeclareFontShape{OT1}{wncyr}{m}{n}{%
   <5> <6> <7> <8> <9> gen * wncyr
   <10> <10.95> <12> <14.4> <17.28> <20.74>  <24.88>wncyr10}{}
\DeclareFontShape{OT1}{wncyr}{m}{it}{%
   <5> <6> <7> <8> <9> gen * wncyi
   <10> <10.95> <12> <14.4> <17.28> <20.74> <24.88> wncyi10}{}
\DeclareFontShape{OT1}{wncyr}{m}{sc}{%
   <5> <6> <7> <8> <9> <10> <10.95> <12> <14.4>
   <17.28> <20.74> <24.88>wncysc10}{}
\DeclareFontShape{OT1}{wncyr}{b}{n}{%
   <5> <6> <7> <8> <9> gen * wncyb
   <10> <10.95> <12> <14.4> <17.28> <20.74> <24.88>wncyb10}{}
\def\rus{\usefont{OT1}{wncyr}{m}{n}\cyracc\fontsize{9}{11pt}\selectfont}
\gdef\n@te#1#2{\leavevmode\vadjust{%
 {\setbox\z@\hbox to\z@{\strut#1}%
  \setbox\z@\hbox{\raise\dp\strutbox\box\z@}\ht\z@=\z@\dp\z@=\z@%
  #2\box\z@}}}
\gdef\leftnote#1{\n@te{\hss#1\quad}{}}
\gdef\rightnote#1{\n@te{\quad\kern-\leftskip#1\hss}{\moveright\hsize}}
\gdef\?{\bbFN@\qumark}
\gdef\qumark{\ifx\next"\DN@"##1"{\leftnote{\rm##1}}\else
 \DN@{\leftnote{\rm??}}\fi{\rm??}\next@}}
\def\oii{\overset{\simeq}\to}
\title[Patching and  local-global principles]{Patching and local-global principles for
homogeneous spaces  over function fields of $p$-adic curves}
\newtheorem{theorem}{Theorem}[section]
\newtheorem{thm}[theorem]{Theorem}
\newtheorem{prop}[theorem]{Proposition}
\newtheorem{lem}[theorem]{Lemma}
\newtheorem{cor}[theorem]{Corollary}
\theoremstyle{definition}
\newtheorem{remark}[theorem]{Remark}
\numberwithin{equation}{section}
\begin{document}

\newcommand{\Symp}{\mbox{\boldmath$\rm Sp$}}
\newcommand{\g}{\mathfrak{g}}
\newcommand{\el}{\mathfrak{l}}
\newcommand{\lt}{\mathfrak{t}}
\newcommand{\ls}{\mathfrak{s}}
\newcommand{\lc}{\mathfrak{c}}
\newcommand{\lu}{\mathfrak{u}}
\newcommand{\lr}{\mathfrak{r}}
\newcommand{\pr}{\operatorname{pr}}
\newcommand{\Hom}{\operatorname{Hom}}
\newcommand{\Rad}{\operatorname{Rad}}
\newcommand{\sign}{\operatorname{sign}}
 
\newcommand{\ve}{{\varepsilon}}
\newcommand{\vp}{{\varpi}}

\newcommand{\kbar}{\overline k}

\newcommand{\sdp}{\mathbin{{>}\!{\triangleleft}}} 
\newcommand{\bbAlt}{\operatorname{A}}   
\newcommand{\GL}{\operatorname{GL}}
\newcommand{\bbPGL}{\operatorname{PGL}}
\newcommand{\SL}{\operatorname{SL}}
\newcommand{\SU}{\operatorname{SU}}
\newcommand{\SO}{\operatorname{SO}}
\newcommand{\bbAd}{\operatorname{Ad}}
\newcommand{\ad}{\operatorname{ad}}

\newcommand{\rank}{\operatorname{rank}}
\newcommand{\bbAut}{\operatorname{Aut}}
\newcommand{\Char}{\operatorname{\rm char\,}} 
\newcommand{\Gal}{\operatorname{Gal}}
\newcommand{\galois}{\Gal}
\newcommand{\rto}{\dasharrow}
\newcommand{\M}{\operatorname{M}}        
\newcommand{\ord}{\mathop{\rm ord}\nolimits}
\newcommand{\Sym}{{\operatorname{S}}}    
\newcommand{\tr}{\operatorname{\rm tr}}
\newcommand{\trace}{\tr}

\newcommand{\Res}{\operatorname{Res}}
\newcommand{\Sha}{\mbox{\rus{\fontsize{11}{11pt}\selectfont{SH}}}}
\newcommand{\G}{\mathcal{G}}
\renewcommand{\H}{\mathcal{H}}
\newcommand{\gen}[1]{\langle{#1}\rangle}
\renewcommand{\O}{\mathcal{O}}
\newcommand{\C}{\mathcal{C}}
\newcommand{\Ind}{\operatorname{Ind}}
\newcommand{\End}{\operatorname{End}}
\newcommand{\T}{\mathbf G}
\newcommand{\GT}{\mbox{\boldmath$\rm T$}}
\newcommand{\Inf}{\operatorname{Inf}}
\newcommand{\Tor}{\operatorname{Tor}}
\newcommand{\m}{\mbox{\boldmath$\mu$}}

\newcommand{\Lbd}{{\sf \Lambda}}

\newcommand{\Id}{\operatorname{Id}}
\newcommand{\id}{\operatorname{id}}
\newcommand{\Nrd}{\operatorname{Nrd}}
\newcommand{\Trd}{\operatorname{Trd}}

\newcommand{\bbA}{{\mathbb A}}
\newcommand{\bbG}{{\mathbb G}}
\newcommand{\bbC}{{\mathbb C}}
\newcommand{\bbZ}{{\mathbb Z}}
\newcommand{\bbP}{{\mathbb P}}
\newcommand{\bbQ}{{\mathbb Q}}
\newcommand{\bbF}{{\mathbb F}}
\newcommand{\bbR}{{\mathbb R}}

\newcommand{\ssetminus}{\! \setminus \!}

\newcommand{\Ker}{\operatorname{Ker}}
\newcommand{\Spec}{\operatorname{Spec}}
\newcommand{\bbPGLn}{{\operatorname{PGL}_n}}
\newcommand{\bbPGLp}  {{\operatorname{PGL}_p}}
\newcommand{\Spin}{\operatorname{Spin}}

\newcommand{\Sympl}{{\operatorname{Sp}}}
\newcommand{\Stab}{\operatorname{Stab}}
\newcommand{\Span}{\operatorname{Span}}
\newcommand{\diag}{\operatorname{diag}}
\newcommand{\Galois}{\Gal}
\newcommand{\Mat}{{\operatorname{M}}}
\newcommand{\Mn}{\Mat_n}
\newcommand{\Ima}{\operatorname{Im}}
\newcommand{\Int}{\operatorname{Int}}
\newcommand{\trdeg}{\operatorname{trdeg}}
\newcommand{\Tr}{\operatorname{Tr}}
\newcommand{\N}{\operatorname{N}}
\newcommand{\sln}{\operatorname{sl}_n}
\newcommand{\cal}{\mathcal}
\newcommand{\Lie}{\operatorname{Lie}}
\newcommand{\bbAss}{\operatorname{Ass}}

\newcommand{\Br}{\operatorname{Br}}
\newcommand{\bbPic}{\operatorname{Pic}}
\newcommand{\Brnr}{\operatorname{Br}_{\text{\rm{nr}}}}
\newcommand{\rk}{\operatorname{rk}}
\newcommand{ \X}{{\cal X}}
 
\newcommand{\alp}{\alpha}
\newcommand{\eps}{\varepsilon}

\newcommand{\VG}{V{}_{{}^{\overset{}

\newcommand{\riso}{\hskip1mm {\buildrel \simeq \over \rightarrow} \hskip1mm}
\newcommand{\liso}{\hskip1mm {\buildrel \simeq \over \leftarrow} \hskip1mm}

{\centerdot}}}
  \hskip .5mm G\hskip
-3.9mm^{\overset{\centerdot}{}} \hskip
-1.37mm{}^{\underset{\centerdot}{}}\hskip 3.5mm{} }
 
\newcommand{\XH}{X{}_{{}^{\overset{}
{\centerdot}}}
  \hskip .5mm H\hskip
-3.9mm^{\overset{\centerdot}{}} \hskip
-1.58mm{}^{\underset{\centerdot}{}}\hskip 3.5mm{} }

\author{J.-L. Colliot-Th\'el\`ene}
\address{C.N.R.S.,
UMR 8628, Math\'ematiques, B\^atiment 425,
Universit\'e Paris-Sud, F-91405 Orsay, France}
\email{jlct@math.u-psud.fr}

\author{R. Parimala}
\address{Emory University, 400 Dowman Drive, Atlanta, Georgia 30322, USA}
\email{parimala@mathcs.emory.edu }

\author{V. Suresh}
\address{Department of Mathematics and Statistics, University of Hyderabad, Ghacibowli, Hyderabad 500046, Andhra Pradesh, India} 
\email{vssm@uohyd.ernet.in}

\subjclass[2000]{11G99, 14G99, 14G05, 11E72, 11E12, 20G35}

\begin{abstract}  
Let $F=K(X)$ be the function field of a smooth projective curve over a $p$-adic field $K$.
To each rank one discrete valuation of $F$ one may associate the completion $F_v$.
Given an $F$-variety $Y$ which is a homogeneous space of a connected reductive
 group $G$ over $F$, one may wonder whether the existence of $F_v$-points
on $Y$ for each $v$ is enough to ensure that $Y$ has an $F$-point.
In this paper  we prove such a result in two cases :

(i) $Y$ is a smooth projective quadric and $p$ is odd. 

(ii)     The group $G$ is the extension
of a reductive group over the ring of integers of $K$, and $Y$ is a principal homogeneous space of $G$.

An essential use is made of recent patching results of Harbater, Hartmann and Krashen.
There is a connection to injectivity properties of the Rost invariant and a result of 
Kat\^o.
\end{abstract}

\maketitle

\section{Introduction}
\label{intro}

Let   $K$ be a $p$-adic field, by which we mean a finite extension
of a field $\bbQ_{p}$. Let $A$ be its ring of integers.
Let $X/K$ be a smooth, projective,
geometrically integral curve. Let $F=K(X)$  be the function field of $X$.
This is a field of cohomological dimension 3.
Let $\Omega$ denote the set of discrete valuations (of rank one)
on the field $F$. Given $v \in \Omega$ we let $F_{v}$ denote the completion of
$F$ at $v$.

\medskip

We wonder whether in this context  there    is a local-global principle for
the existence of rational points on homogeneous spaces of connected
linear algebraic groups over $F$.

\medskip

{\it Conjecture 1}  Let $F=K(X)$ be as above.  
Let $Y/F$ be a projective homogeneous space of a connected linear
algebraic group. If $Y$  has points in all completions $F_{v}$,
then it has an $F$-rational point.
 
\medskip
  
{\it Conjecture  2}  Let $F=K(X)$ be as above. Let $G/F$ be a semisimple,
simply connected group.
If a class $\xi$ in the Galois cohomology set $H^1(F,G)$ has trivial image in
each $H^1(F_{v},G)$, then $\xi$ is trivial. In other words, if a principal
homogeneous space  under $G$ has points in all completions $F_{v}$,
then it has an $F$-rational point.

\medskip

It is unlikely that Conjecture 2 holds for an arbitrary connected
reductive group $G$, for instance for a torus. It definitely fails  for
$G$ a finite constant group, see \S 6.

\medskip

As we explain in Section \ref{RostKato} (Theorem \ref{rostkato}),  Conjecture 2
may be proved for most quasi\-split simply connected groups  by using a combination of
properties of the Rost invariant and a result of Kat\^o  \cite{Kato}.

\bigskip

In their recent paper  \cite{HHK}, Harbater, Hartmann and Krashen have developed
the patching technique of \cite{HH} to the point where they get local-global theorems
for homogeneous spaces. Their main local-global  theorems refer to some 
other families of overfields of $F$ than the family $\{F_{v}\}$ we consider here.
But they manage to apply the technique to the extent that
they give a radically new proof of the theorem by Parimala and Suresh \cite{PS2}
that any quadratic form in at least 9 variables over $K(X)$ ($K$ as above, nondyadic,
$X$ a curve over $K$) has a nontrivial zero.

Their techniques apply more generally to complete discrete valuation rings
with arbitrary residue field.

\medskip

In the present paper, we use the method and 
theorems of Harbater, Hartmann and Krashen
to prove the  following results.

1) For smooth quadrics of dimension at least 1, which are projective homogeneous spaces under the  associated special orthogonal group, under the assumption that the characteristic of the residue field of
$K$ is not 2, we prove Conjecture 1.  
We actually prove the more general result (Theorem  \ref{localglobalquadratic}) :

{\it Let $A$ be a complete
 discrete valuation ring with fraction field $K$
and residue field $k$ of characteristic different from 2.
Let $X$ be a smooth, projective, geometrically integral
curve over $K$. Let $ F=K(X)$ be the function field of $X$. 
Let $q$ be a nondegenerate 
 quadratic form  over $F$ in at least 3 variables. If for each discrete valuation  
 of $F$, the form $q$ is isotropic over 
 the completion of $F$ with respect to this valuation, then $q$ is isotropic
 over $F$.}

2)  We show (Theorem \ref{mainthm}) that the statement of
Conjecture 2 holds for any (fibrewise connected) reductive $A$-group $G$.

This relies on the following general result (Theorem \ref{generalizedGroth}) :

{\it Let $A$ be a complete discrete valuation ring, $K$ its field of fractions
and $k$ its  
residue field.  
Let $\X/A$ be a projective, flat curve over ${\rm Spec} \ A$. 
Assume that $\X$
is connected and regular.
Let $F$ be the function field of $X$.
Let $\Omega$ be the set of all discrete valuations on $F$.
Let $G/A$ be a  (fibrewise connected) reductive group.
If  
 there exists a connected linear algebraic group $H/F$
such that the $F$-group $ (G\times_{A}F)\times_{F} H $ is an $F$-rational variety,
 then
the restriction map with respect to completions
$ H^1(F,G) \to \prod_{v \in \Omega} H^1(F_{v},G)$
has a trivial kernel.}

As mentioned above, an independent argument, which  builds upon injectivity properties of the Rost invariant (which themselves rely on a case by case proof)
and upon a theorem of Kat\^o, yields a proof of Conjecture 2 for quasisplit, absolutely simple,
simply connected   groups over $F$  with no $E_{8}$-factor.

In the final Theorem \ref{rostinjectifK(X)}, we revert the process: we use Theorem \ref{mainthm} together with Bruhat-Tits theory
to discuss  the triviality of the kernel of the Rost invariant for split simply connected groups over
a function field in one variable over a $p$-adic field. The result is classificationfree; in particular, it applies
to $E_{8}$.

\medskip

Throughout this paper, when we write ``discrete valuation ring'', we mean ``discrete valuation ring of rank one'',
and when we write discrete valuation we mean valuation with value group $\bbZ$.

\bigskip

\section{Why the $u$-invariant should behave well for function fields over the $p$-adics}

The $u$-invariant of a field is the maximal dimension of anisotropic quadratic forms
over that field.
Let us start with some reminders from the paper \cite{KK} by Kat\^o and Kuzumaki.
 
 Let $r \geq 1$ be an integer. We say that a field $F$ is a $C_{r}^0$ field
 if the following condition holds : 
 
 {\it For any finite field extension $F'$ of $F$ and any integers $d \geq 1$ and $n > d^r$,
 for any homogeneous form over $F'$ of degree $d$ in $n$ variables, the g.c.d.
 of the degrees of finite field extensions $F''/F'$ over which the form acquires a nontrivial
 zero is 1.}

 The condition amounts to requiring that the $F'$-hypersurface defined by the form
 contain a zero-cycle of degree 1 over $F'$.

 Assume  ${\rm{char}}(F)=0$. For each prime $l$, let $F_{l}$ be the 
 fixed field of a pro-$l$-Sylow subgroup of the absolute Galois group of $F$.
 Any finite subextension of $F_{l}/F$ is of degree prime to $l$.

 The field $F$ is $C_{r}^0$ if and only if each of the fields $F_{l}$ is $C_{r}$
 in the usual sense (\cite[Lemma 1]{KK}).
 A finite field extension of a $C_{r}^0$-field is $C_{r}^0$.
 The following easy lemma does not appear in \cite{KK}.

 \begin{lem}
 Let $F$ be a field of characteristic zero. If $F$ is $C_{r}^0$
 then a function field $E=F(X)$ in $s$ variables over $F$ is $C_{r+s}^0$.
 \end{lem}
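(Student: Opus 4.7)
The plan is to reduce to the classical theorems of Tsen--Lang and Lang--Nagata by using the pro-$l$-Sylow characterization of $C_r^0$ recalled just above. By that characterization, it suffices to show that for every prime $l$, every finite extension $E'/E$, every $d \geq 1$ and every integer $n > d^{r+s}$, and every homogeneous form $f$ of degree $d$ in $n$ variables over $E'$, there is a finite extension $E''/E'$ of degree prime to $l$ over which $f$ has a nontrivial zero.

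Fix such $l$, $E'$, and $f$. The key construction is to work inside a fixed algebraic closure $\overline{E'}$: let $\overline{F}$ denote the algebraic closure of $F$ inside $\overline{E'}$, choose a pro-$l$-Sylow subgroup $P$ of $\Gal(\overline{F}/F)$, and set $F_l := \overline{F}^P$. By the assumption that $F$ is $C_r^0$ the field $F_l$ is $C_r$, and by general Galois theory every finite subextension of $F_l/F$ has degree prime to $l$. Form the composite $L := F_l \cdot E'$ inside $\overline{E'}$. Since $F_l/F$ is algebraic, any transcendence basis of $E'/F$ remains a transcendence basis of $L/F_l$, so $L$ is a finitely generated extension of $F_l$ of transcendence degree $s$.

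I would then apply the Tsen--Lang theorem $s$ times to conclude that $F_l(t_1,\dots,t_s)$ is $C_{r+s}$, followed by Nagata's theorem that a finite extension of a $C_{r+s}$ field is again $C_{r+s}$, to deduce that $L$ itself is $C_{r+s}$. Since $n > d^{r+s}$, the form $f$ acquires a nontrivial zero over $L$, and such a zero is already defined over a finite subextension $E''/E'$ of $L/E'$. Any finite subextension of $L/E'$ is however contained in $F_l' \cdot E'$ for some finite subextension $F_l'/F$ of $F_l/F$, so $[E'':E']$ divides $[F_l' \cdot E':E']$, which in turn divides $[F_l':F]$, and is therefore prime to $l$. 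This yields the required extension $E''$.

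I do not anticipate any serious obstacle: the argument is essentially a translation of the classical Tsen--Lang--Nagata machinery through the $l$-Sylow filter, with the $C_r^0$ characterization doing the bookkeeping on both sides. The one point deserving care is arranging all the embeddings inside a single algebraic closure so that the composite $F_l \cdot E'$ is literally a field, and checking in that setup that each of its finite subextensions over $E'$ has degree prime to $l$ --- both are routine but easy to misstate.
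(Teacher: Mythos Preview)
Your proposal is correct and follows essentially the same route as the paper: pass to $F_l$, use the classical Tsen--Lang--Nagata transitivity to see that the relevant compositum is $C_{r+s}$, then descend to a finite extension of degree prime to $l$. The only cosmetic difference is that the paper first replaces $F$ by a finite extension so that $E'$ becomes the function field of a geometrically integral $F$-variety $X$, making the compositum $F_l(X)$ manifestly a function field over $F_l$; you instead form $F_l\cdot E'$ directly inside a fixed algebraic closure and check by hand that it is finitely generated of transcendence degree $s$ over $F_l$ and that finite subextensions have degree prime to $l$. Both setups amount to the same argument.
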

 
 \begin{proof} Let $E'$ be a finite field extension of $E$.
 After replacing $F$ by a finite extension, which by assumption is still $C_{r}^0$,
 we may assume that $E'$ is the function field  $F(X)$ of
 a geometrically integral $F$-variety $X$.  The field $F_{l}$ is $C_{r}$, hence by the classical
 transitivity properties (Lang, Nagata), the  field $F_{l}(X)$, function field of $X\times_{F}F_{l}$,
 is a $C_{r+s}$-field. Thus any form of degree $d$ over $F(X)$ in $n > d^{r+s}$ variables
 has nontrivial solutions in $F_{l}(X)$, hence in a finite extension of $F(X)$ of degree prime to $l$.
 As this applies to each prime  $l$, this concludes the proof.
 \end{proof}

It is an open question whether  $p$-adic fields have the
 $C^0_{2}$-property.
An equicharacteristic  zero  analogue of that statement 
is proven in \cite{CT}.

\begin{prop}
Assume that 
 $p$-adic fields have the $C^0_{2}$-property.
Then over  any function field $K(X)$ of transcendance degree $r$
over a $p$-adic field $K$, any quadratic form in strictly more than $2^{2+r}$
variables has a nontrivial zero.
\end{prop}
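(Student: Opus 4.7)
The plan is to chain the preceding lemma with Springer's theorem on odd-degree extensions. First I would invoke the hypothesis that $p$-adic fields are $C_{2}^{0}$ and apply the lemma once with $s=r$: this directly yields that $F:=K(X)$ is $C_{r+2}^{0}$, with no iteration needed.

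Next I would unpack the $C_{r+2}^{0}$ condition in the case $d=2$ and $n>2^{r+2}$, taking the auxiliary extension to be $F'=F$ itself. By definition, the g.c.d.\ of the degrees $[F'':F]$ of finite extensions over which a given quadratic form $q$ acquires a nontrivial zero equals $1$. In particular there must exist at least one such $F''/F$ of odd degree. Equivalently, using the Sylow-fixed-field characterization of $C_{r}^{0}$ recalled just before the lemma, $q$ is isotropic over the field $F_{2}$ fixed by a pro-$2$-Sylow of the absolute Galois group of $F$; since every finite subextension of $F_{2}/F$ has odd degree, $q$ is already isotropic over some odd-degree finite extension of $F$.

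To descend back to $F$, I would appeal to Springer's classical theorem: in characteristic different from $2$, a quadratic form isotropic over an odd-degree extension is already isotropic over the base field. Since ${\rm char}(F)=0$, this applies and gives the desired conclusion.

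I do not expect any genuine obstacle: the argument is essentially formal once the lemma is available. The only point requiring a moment of care is the passage from \emph{g.c.d.\ of degrees equals $1$} to \emph{existence of a single odd-degree extension}, which is handled most transparently through the $F_{l}$-characterization rather than by a direct gcd manipulation.
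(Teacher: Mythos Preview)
Your proposal is correct and follows exactly the paper's own argument: apply the lemma to get that $K(X)$ is $C^{0}_{r+2}$, extract an odd-degree extension over which the form is isotropic, and invoke Springer's theorem to descend. The only difference is that you spell out in more detail how to pass from ``gcd of degrees equals $1$'' to ``some odd-degree extension exists'' (which is immediate: if all such degrees were even the gcd would be even), whereas the paper states this in a single clause.
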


\begin{proof}
By the previous lemma, such a quadratic form has a nontrivial zero
in an extension of odd degree of the field $K(X)$. By a theorem of Springer
  \cite[VII, Thm.  2.3]{Lam}
 this implies that the quadratic form has a nontrivial
zero in $K(X)$.
\end{proof}

\section{A local-global principle for isotropy of quadratic forms }

\begin{thm}\label{localglobalquadratic}
Let $A$ be a complete
 discrete valuation ring with fraction field $K$
and residue field $k$ of characteristic different from 2.
Let $X$ be a smooth, projective, geometrically integral
curve over $K$. Let $ F=K(X)$ be the function field of $X$. 
Let $q$ be a nondegenerate 
 quadratic form  over $F$ in at least 3 variables. If for each discrete valuation  
 of $F$, the form $q$ is isotropic over 
 the completion of $F$ with respect to this valuation, then $q$ is isotropic
 over $F$.
\end{thm}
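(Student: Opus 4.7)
\bigskip
\noindent\textbf{Proof proposal.}

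The plan is to combine the Harbater--Hartmann--Krashen patching local-global principle with a descent argument from HHK ``patches'' down to completions at discrete valuations. A preliminary reduction lets us assume $3 \leq \dim q \leq 8$: by the Parimala--Suresh theorem (re-proved by HHK via patching, and recalled in the Introduction) any quadratic form of dimension at least $9$ over $F$ is already isotropic, independent of any local hypothesis.

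Next, choose a regular proper model $\mathcal{X}/\mathrm{Spec}(A)$ of $X$ (possible by Lipman). After finitely many blow-ups one may assume that the support of the divisor of the discriminant of $q$, together with the special fibre, forms a strict normal crossings divisor on $\mathcal{X}$. With such a model and a choice of a finite set $\mathcal{P}$ of closed points of the special fibre meeting all intersections of components, the HHK machinery produces a family of overfields $\{F_P\}_{P \in \mathcal{P}}$ and $\{F_U\}_U$, where $U$ runs over the connected components of the complement of $\mathcal{P}$ in the special fibre, and an associated family of ``branch'' fields $F_\wp$. Their local-global theorem for quadratic forms (valid in dimension $\geq 3$, since the quadric $\{q=0\}$ is a projective homogeneous space under the special orthogonal group) asserts that $q$ is isotropic over $F$ if and only if $q$ is isotropic over each $F_P$ and each $F_U$. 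It thus remains to show that the hypothesis that $q$ be isotropic over every $F_v$ implies isotropy over each of these HHK overfields.

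For a patch $F_U$ associated with an open subset $U$ of an irreducible component $Z$ of the special fibre, the generic point of $Z$ is a codimension-one point of the regular scheme $\mathcal{X}$, and hence supplies a discrete valuation $v_Z$ of $F$ with $F_U \subset F_{v_Z}$ in the relevant sense; under the SNC hypothesis, isotropy of $q$ over $F_{v_Z}$ propagates to $F_U$ by a Hensel-lemma / lifting argument at the generic points of the discriminant components meeting $U$. For a patch $F_P$ at a closed point $P$ of the special fibre, $F_P$ is a henselian two-dimensional local field whose completion $\widehat{\mathcal{O}}_{\mathcal{X},P}$ has two regular parameters $(\pi, t)$, where $\pi$ is the uniformizer of $A$ and $t$ cuts out the component of the fibre through $P$. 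Under SNC one writes $q \simeq q_1 \perp \pi q_2 \perp t q_3 \perp \pi t q_4$ with ``unit'' coefficients, and applies Springer's theorem for quadratic forms over a complete discretely valued field twice (with respect to the valuations defined by $t$ and by $\pi$). This expresses isotropy of $q$ over $F_P$ in terms of isotropy of certain residue forms over $k(P)$, which in turn is controlled by isotropy of $q$ at the discrete valuations of $F$ corresponding to $t=0$ and $\pi=0$ on $\mathcal{X}$ (and possibly auxiliary divisors obtained by further blow-ups above $P$), all of which lie in $\Omega$.

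The main obstacle lies in the two-dimensional analysis at the points $P$: one must carefully align the Springer decomposition over $F_P$ with isotropy over discrete valuation completions of $F$ itself, and in the low-dimensional range $3 \leq \dim q \leq 4$ the residue forms are small enough that no ``dimension dropping'' slack remains. The likely way around this is to enlarge the set of discrete valuations used in the argument by repeatedly blowing up $\mathcal{X}$ at closed points where the analysis of residues fails, using that each such blow-up produces an exceptional divisor giving a new discrete valuation of $F$ at which, by hypothesis, $q$ is still isotropic; one then invokes a structural result on quadratic forms over two-dimensional complete local rings with SNC discriminant to conclude. Once isotropy over every $F_P$ and every $F_U$ has been established, HHK patching returns the desired isotropy of $q$ over $F$.
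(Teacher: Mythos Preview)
Your overall strategy matches the paper's: establish isotropy of $q$ over each HHK overfield $F_U$ and $F_P$, then apply the HHK local--global principle for the connected rational group $\SO(q)$. But two points need correction. The preliminary reduction to $\dim q \leq 8$ via Parimala--Suresh is invalid in the stated generality: the theorem concerns an arbitrary complete discrete valuation ring $A$ with residue characteristic $\neq 2$, not only rings of integers of $p$-adic fields, and no $u$-invariant bound is available in general. The paper makes no such reduction; the argument works uniformly in all ranks $\geq 3$. For $F_U$, the inclusion $F_U \subset F_{v_Z}$ goes the wrong direction for a naive descent of isotropy; what the paper actually does is apply Springer's theorem at the valuation $v_Z$ to the diagonalization $q \simeq \langle b_1,\dots,b_\rho,\, s c_1,\dots, s c_\sigma\rangle$ (with $b_i,c_j \in R_U^\times$ and $s$ a local equation for $U$), obtain a residue form with unit coefficients in the Dedekind ring $k[U]$ that is isotropic over $k(U)$, spread the zero to $k[U]$, and then lift to $\hat{R}_U$ by Hensel.

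Most importantly, there is no obstacle at the closed points $P$, and no iterated blow-ups are needed. The observation you are missing is that the height-one prime $(y)$ of the \emph{uncompleted} local ring $R_P = \mathcal{O}_{\mathcal{X},P}$ already defines a discrete valuation of $F$ itself, so the hypothesis applies to it directly. Springer's theorem over the completion at this valuation shows that one of the residue forms $q_1 \perp x q_2$ or $q_3 \perp x q_4$ is isotropic over $E = \operatorname{Frac}(R_P/(y))$; a second application of Springer (only the elementary direction, valid over any discrete valuation ring, here with uniformizer $x$) then shows that the reduction over $k(P)$ of some $q_i$ is isotropic, and this lifts to $\hat{R}_P$ by Hensel, giving $q$ isotropic over $F_P$. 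This chain of implications is valid regardless of $\dim q$; your worry about the range $3 \leq \dim q \leq 4$ is unfounded, since whenever a residue piece is too small to be isotropic the implication simply forces the isotropy onto the other piece.
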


\begin{proof}
Suppose we are given a diagonal quadratic form $q=<a_1,\cdots,a_n>$ over $F=K(X)$
which is isotropic over the field of fractions of the completion of any
discrete valuation ring of $F$.

 Let us recall basic notation from \cite{HH} and \cite{HHK}.

Let $t$ denote a uniformizing parameter for $A$.

One may choose a regular proper model $\X/A$ of $X/K$ such that
there exists a reduced divisor $D$ with strict normal crossings
which contains both the support of the divisor of the $a_{i}$'s
and the components of the special fibre of $X/A$.
Let $Y=\X\times_{A}k$ denote the special fibre.

For the  generic point $x_{i}$ of an irreducible  component  $Y_{i}$ of   $Y$, there is an affine Zariski neighbourhood $W_{i}$ of  
$x_{i}$
 in $\X$ such that 
the restriction of $Y_{i}$ to $W_{i}$ is a principal divisor.  

Let $S_{0}$ be a finite set of closed points of the special fibre
containing all singular points of $D$ and  all points which lie on some $Y_{i}$ but not in 
the corresponding 
$W_{i}$.

Choose a finite $A$-morphism $f :\X \to \bbP^1_{A}$ as in \cite[Prop. 6.6]{HH}.
In particular, we have the following three properties.
 The set $S_{0}$ is contained in   $S$,  the inverse image under $f$  of the point at infinity
of the special fibre $\bbP^1_{k}$.  All the intersection points of two components $Y_i$ are in $S$.
 Each component $Y_{i}$ contains at least one point of $S$.

   Let $U \subset Y$ run through the
reduced, irreducible components of the complement of $S$ in $Y$.
Each  $U$ is a regular affine irreducible curve  over $k$. 
Let $k[U]$ be the ring of regular functions on this curve.
This is a Dedekind domain.
We thus have $U= \Spec k[U]$. Let $k(U)$ denote
the fraction field of $k[U]$.

Each $U$ is included in an 
open affine subscheme  $\Spec R^U$   of $\X$
and is a principal effective divisor in this affine subscheme.

By definition, the ring $R_U$ is  the ring of elements in $F$
which are regular on $U$. It is a regular ring,
as a direct limit of regular rings. The ring $R_{U}$
is a localisation of $R^U$. Thus $U$ is a principal effective divisor
on $\Spec R_{U}$. It is given by the vanishing of an element $s \in R_{U}$.

The $t$-adic completion
${\hat R}_U$ of $R_U$ is a domain (\cite{HHK}, Notation 3.3).
We have $t=u.s^r$ for some integer $r \geq 1$ and a unit $u \in R_{U}^{\times}$.
Thus  the $t$-adic completion ${\hat R}_U$   coincides with the $s$-adic completion
of $R_{U}$. 
By definition, $F_U$ is the field of fractions of ${\hat R}_U$. We have $k[U]=R_{U}/s= {\hat R}_U/s$. 

For $P\in S$, the completion  $\hat{R}_P$
   of the local ring $R_P$ of $\X$ at $P$ is a domain
    (\cite{HHK}, Notation 3.3). By definition, the field
$F_P$ is  
 the field of fractions of $\hat{R}_P$.

For $p=(U,P)$  a pair with   $P \in S$ in the closure
  of   an irreducible component $U$ of the complement of $S$ in $Y$,
  one lets $R_{p}$ be the
 complete discrete valuation ring which is  the completion of the
 localisation of  $\hat{R}_P$ at the height one prime ideal
 corresponding to $U$. By definition, the field $F_p$ is the field of fractions of $R_p$.

By  \cite[Prop. 6.3]{HH}, the field $F$ is the inverse limit of the finite inverse
system of fields $\{F_U,F_P,F_p\}$.

\medskip

Let us show that $q$ is isotropic over each field $F_U$.
 
Each diagonal
entry $a_i$ of the form $q$    is supported only along $U$ in $\Spec R_U$, thus
is of the form $u.s^j$  where $u$ is a unit in $R_U$. Hence   the
 quadratic form $q$ over $F$ is isomorphic to a quadratic form over $F$ of the
shape  $$<b_1,\cdots,b_{\rho},s.c_1, \cdots,  s.c_{\sigma}>$$ where $b_i$
and $c_j$ are units in $R_U$.   

By hypothesis, $q$ is isotropic over the field of fractions of the
completed local ring  of $\X$ at the generic point of $U$.
By a theorem of  Springer \cite[VI, Prop. 1.9 (2)]{Lam}, this implies that
the image of
at least one of the two forms
$ {q}_1=<b_1,\cdots, b_{\rho}>$ or  $ {q}_2=<c_1,\cdots,c_{\sigma}>$
under the composite  homomorphism $R_{U} \to k[U] \hookrightarrow  k(U)$ is isotropic over $k(U)$.

Since the residue characteristic is not 2, each of the
forms $q_{1}$ and  $q_{2}$ defines a smooth quadric over $R_{U}$.
In particular each of them defines a smooth quadric over $k[U]$.
Since $k[U]$ is a Dedekind domain, if such a projective quadric has
a point over $k(U)$, it has a point over $k[U]$. 
Since the quadric is smooth over $R_{U}$,
a $k[U]$-point lifts to an ${\hat R}_U$-point
(compare the discussion after \cite[Lemma 4.5]{HHK}).
Thus $q$ has a nontrivial zero over $F_{U}$.

Let us show that $q$ is isotropic over each field $F_P$.
Let $P \in S$. The local ring $R_P$ of $\X$ at $P$ is regular.
Its maximal ideal is generated by two elements $(x,y)$
with the property that any $a_i$ is the product of a unit, a power of $x$
and a power of $y$. Thus over the fraction field 
$F$ 
of $R_P$,
the form $q$ is isomorphic to a form $$q_1 \perp x.q_2 \perp y.q_3 \perp xy.q_4$$
where each $q_i$ is a nondegenerate diagonal quadratic form over $R_P$.
Let $R_y$ be the localization of   $R_P$ at the prime ideal $(y)$. 
This is a
discrete valuation ring with fraction field $F$ 
and with residue field
$E$ the field of fractions of the discrete valuation ring $R_P/(y)$.
By hypothesis,  the form $(q_1\perp x.q_2) \perp y.(q_3 \perp xq_4)$ is isotropic over
the field of fractions of the completion of $R_y$. By Springer's theorem
\cite[VI, Prop.  1.9 (2)]{Lam},
this implies that over $E$ the reduction of
either  $(q_1\perp x.q_2)$ or $(q_3 \perp xq_4)$ is isotropic.
Since $x$ is a uniformizing parameter for $R_P/(y)$,
by Springer's theorem \cite[VI, Prop.  1.9 (2)]{Lam},
this last statement implies that over the residue field
of $R_P/(y)$, the reduction of one of the forms $q_1,q_2,q_3,q_4$
is isotropic. But then one of these forms is isotropic over $\hat{R}_P$,
hence over the field $F_P$ which is the fraction field of $\hat{R}_P$.

 \medskip

 The quadric $Z/F$ defined by the vanishing of $q$ is a homogeneous space
 of the group $\SO(q)$ over $F$, which since $q$ 
 is of rank at least 3 is 
 a connected group.
 By Witt's result, for any field $L$ containing $F$,
 the group $\SO(q)(L)$ acts transitively on $Z(L)$.
 The $F$-variety underlying $\SO(q)$ is   $F$-rational
 (Cayley parametrization).  We have $Z(F_{U}) \neq \emptyset$
 for each $U$ and $Z(F_{P})\neq \emptyset$ for each $P \in S$.
By   \cite[Thm.~Ê3.7]{HHK}, we get $Z(F) \neq \emptyset$.
 \end{proof}

\begin{remark}\label{remarkmodel}
Note that in the proof the only discrete valuation rings 
 which are
used are the local rings at a point of codimension 1 on a {\it suitable} regular proper
model $\X$ of $X$. 
See however Remark \ref{notsuitable}.
\end{remark}

\begin{remark}\label{notwithtwo}
The theorem does not extend to forms in $2$ variables. See Remark \ref{totallysplit}  
and 
Section \ref{Appendix}. 
\end{remark}

 The following corollary is a variant of  a  theorem of Harbater, Hartmann and Krashen \cite[Thm. 4.10]{HHK}.

\begin{cor}\label{induction}
Let $A$ be a complete discrete valuation ring with fraction field $K$
and residue field $k$ of characteristic different from 2. Let $r \geq 1$ be an integer.
Assume that 
any quadratic form in strictly more than $2r$ variables over any function field in one 
variable over $k$
is isotropic.

Then any quadratic form in strictly more than $4r$ 
variables over the function field $F=K(X)$ of a curve $X/K$  is isotropic.
\end{cor}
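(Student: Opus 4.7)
The plan is to apply Theorem \ref{localglobalquadratic} as a black box. Let $q$ be a nondegenerate quadratic form over $F = K(X)$ with $\dim q > 4r$; in particular $\dim q \geq 5$, so the local-global principle applies and reduces the problem to showing that $q$ is isotropic over the completion $F_v$ for every discrete valuation $v$ of $F$, or equivalently that $u(F_v) \leq 4r$ for every such $v$. Since $\Char \kappa(v) \neq 2$ for every $v$, Springer's $u$-invariant formula for complete discretely valued fields gives $u(F_v) = 2 u(\kappa(v))$, so it is enough to establish $u(\kappa(v)) \leq 2r$.

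A preliminary input extracts from the hypothesis the bound $u(k_0) \leq r$ for every finite extension $k_0/k$. Given an anisotropic form $\phi$ of dimension $n$ over $k_0$, the form $\phi \perp T\phi$ is anisotropic of dimension $2n$ over the rational function field $k_0(T)$, by Springer's theorem applied to the $T$-adic valuation. Since $k_0(T)$ is a function field in one variable over $k$, the hypothesis forces $2n \leq 2r$ and hence $n \leq r$.

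I next bound $u(\kappa(v))$ case by case on $v|_K$. If $v|_K = 0$, then $v$ corresponds to a closed point of the smooth projective curve $X/K$, so $\kappa(v)$ is a finite extension of $K$, hence a complete discretely valued field whose residue field is finite over $k$; Springer's formula combined with the preliminary then gives $u(\kappa(v)) \leq 2r$. If $v|_K$ is nontrivial, Abhyankar's inequality forces $\trdeg(\kappa(v)/k) \leq 1$, with equality implying $\kappa(v)/k$ is finitely generated. In the equality case $\kappa(v)$ is a function field in one variable over $k$, so $u(\kappa(v)) \leq 2r$ directly by hypothesis. Otherwise $\kappa(v)/k$ is algebraic; every anisotropic form over $\kappa(v)$ is defined over a finitely generated, hence finite, subextension $k_0 \subset \kappa(v)$, and the preliminary bound $u(k_0) \leq r$ yields $u(\kappa(v)) \leq r$.

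The main obstacle is precisely this last ``exotic'' sub-case: a discrete valuation $v$ with $v|_K \neq 0$ and $\kappa(v)/k$ algebraic (possibly of infinite degree). Such $v$ need not come from a codimension-one point of any regular proper model of $X$ and so lies outside the patching geometry underlying the proof of Theorem \ref{localglobalquadratic}. It is dispatched by the elementary observation that every quadratic form over $\kappa(v)$ is controlled by a finitely generated, hence finite, subextension of $k$, where the preliminary bound applies.
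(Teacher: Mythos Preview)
Your argument is correct. The reduction to bounding $u(\kappa(v))$, the preliminary bound $u(k_0)\leq r$ via $\phi\perp T\phi$ over $k_0(T)$, and the case analysis using Abhyankar's inequality all go through; in particular the ``exotic'' case where $\kappa(v)/k$ is algebraic of possibly infinite degree is disposed of correctly by descent to a finite subextension.

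The paper's proof differs in one respect worth noting. Rather than treating Theorem~\ref{localglobalquadratic} as a pure black box and then confronting every discrete valuation of $F$, the paper invokes Remark~\ref{remarkmodel}: the proof of Theorem~\ref{localglobalquadratic} only ever uses the valuations attached to codimension~1 points of a suitable regular proper model $\X/A$. For those valuations the residue field is automatically either a finite extension of $K$ (points on the generic fibre) or a function field in one variable over $k$ (generic points of components of the special fibre), so the exotic sub-case and the appeal to Abhyankar's inequality never arise. What your approach buys is that it uses Theorem~\ref{localglobalquadratic} exactly as stated, without peering into its proof; what the paper's approach buys is brevity, since it sidesteps the classification of residue fields for arbitrary discrete valuations on $F$.
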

\begin{proof}
Let $L$ be a finite field extension of $K$.
This is a complete discrete value field with residue field
a finite extension $l$ of $k$. The hypothesis made on quadratic
forms over functions fields in one variable over $k$, in particular 
quadratic forms over the field $l(t)$,
and Springer's theorem 
(\cite[VI, Cor. 1.10]{Lam})
applied to the field  $l((t))$  
imply that any quadratic form  in strictly more than $r$ variables over $l$
has a zero. Another application of Springer's theorem then
implies that any quadratic form in strictly more than $2r$ variables over $L$
is isotropic.

 Let $q$ be a quadratic form in $n$ variables over $F$ with 
$ n > 4r$. By Theorem \ref {localglobalquadratic} and Remark
\ref{remarkmodel}, to prove the corollary
  it suffices to show that $q$ is 
isotropic over $F_v$ for every discrete valuation $v$ with 
residue field either a function field in one variable over $k$
or a finite extension of $K$. By the hypothesis, the preceding paragraph and Springer's
theorem, the quadratic form $q$ is isotropic over such an $F_v$.
\end{proof}

Corollary  \ref{induction} in its turn is a generalization of the main result of \cite{PS2}:

\begin{cor}
If $K$ is a nondyadic $p$-adic field,
any quadratic form in at least 9 variables over a  function field in one variable $K(X)$
has a nontrivial zero.
\end{cor}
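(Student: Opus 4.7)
The plan is simply to verify the hypothesis of Corollary \ref{induction} with $r=2$ for this particular choice of $K$, and then read off the conclusion.

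First I would identify the residue field: since $K$ is a nondyadic $p$-adic field, its ring of integers $A$ is a complete discrete valuation ring with residue field $k$ a finite field of odd characteristic. Then I would recall that by the Chevalley--Warning theorem every finite field is $C_1$ (quasi-algebraically closed), and by the Tsen--Lang transitivity theorem, a function field in one variable over a $C_1$ field is $C_2$. In particular, any function field in one variable $k(Y)$ over the finite residue field $k$ is $C_2$, so every quadratic form in strictly more than $2^2 = 4$ variables over $k(Y)$ is isotropic. This is precisely the hypothesis of Corollary \ref{induction} with $r=2$.

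Applying Corollary \ref{induction} with $r=2$, every quadratic form in strictly more than $4 \cdot 2 = 8$ variables over $F = K(X)$ is isotropic, i.e.\ every quadratic form in at least $9$ variables over $K(X)$ has a nontrivial zero, which is exactly the statement.

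There is essentially no obstacle here: the content of the corollary is entirely in Corollary \ref{induction} (which in turn rests on Theorem \ref{localglobalquadratic} and Springer's theorem to reduce the residue data to the function field in one variable case). The only thing to check is the classical fact that $u(k(Y)) \leq 4$ for $k$ finite of odd characteristic, which follows from $C_r$-theory as above (or equivalently from the theorem of Tsen asserting that the Brauer group of such a field is generated by quaternion algebras combined with Merkurjev's theorem, but the Tsen--Lang route is more elementary and self-contained).
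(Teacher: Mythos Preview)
Your proposal is correct and follows exactly the route the paper intends: the corollary is stated immediately after Corollary~\ref{induction} as a direct specialization, and your argument (residue field finite of odd characteristic, $C_2$ property of function fields in one variable over a finite field via Chevalley--Warning and Tsen--Lang, hence $r=2$ works) is precisely the verification needed. One small quibble: your parenthetical alternative invoking ``the theorem of Tsen asserting that the Brauer group of such a field is generated by quaternion algebras'' is garbled (Tsen's theorem is about function fields over algebraically closed fields, and the Brauer group is trivial, not merely generated by quaternions); the cleanest justification remains the $C_2$ property, or alternatively the Hasse--Minkowski theorem for the global field $k(Y)$.
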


\begin{remark}\label{notsuitable}
{\it In Theorem \ref{localglobalquadratic}, it is not enough to consider the discrete valuation rings corresponding
to the codimension 1 points of a given regular proper  model $\X/A$.}

  Let $p$ be an odd prime and $a$ a unit in $\bbZ_p$ which is not a square.
Rowen, Sivatski, Tignol  \cite[Cor. 5.3]{RST} (see also \cite{JT})
 have shown that the tensor
product  $ D=(a,p) \otimes (t, a(p-t))$ of
quaternion algebras over $F=\bbQ_p(t)$ is a division algebra. 
The tensor product $(a,b) \otimes (c,d)$ of two quaternion algebras over a field $F$
(${\rm char} \ F \neq 2$)
is a division algebra, i.e. has index 4,
 if and only if the associated Albert form  $<-a,-b,ab, c,d,-cd>$ is 
anisotropic over $F$. Thus 
the  quadratic form $q=<-a, -p, ap, t, a(p-t), -at(p-t)>$ is   a 6-dimensional
anisotropic quadratic form over $F=\bbQ_p(t)$.

Let $\X=\bbP^1_{\bbZ_{p}}$ be the projective line over $\bbZ_p$. The codimension one
points $v$ of $\X$ are  given by   irreducible monic polynomials in $\bbQ_p[t]$,
by $1/t$ and by the height one prime ideal 
of
$\bbZ_p[t]$ generated by $p$. 
Let $F=\bbQ_{p}(t)$, and let $F_{v}$ denote the completion of $F$
at a discrete valuation $v$ of $F$.

The residue field at point $v$ of codimension 1 of $\X$ is either a $p$-adic field
or $\bbF_{p}(t)$. Any quadratic form in at least 5 variables over such a field
is isotropic. At any prime $v$ of codimension 1 of $\X$ different from $p,t,(p-t), 1/t$,
 the form $q$ therefore has a zero over $F_{v}$.
 At $v=t$, one of the residue forms is 
 $<-a,-p,ap,ap>$
which is isotropic over the residue field $\bbQ_{p}$, since $<-1,a,a>$ is.
At $v=t-p$, one of the residue forms of $q$ is $<-a,-p,ap,p>$
and this form is clearly isotropic.
At $v=(1/t)$, one of the residue forms of $q$ is $<-a,-p,ap,a>$
 which is clearly isotropic.
At $v=p$, one of the residue forms over the field $\bbF_{p}(t)$
 is $<-a,t,-at,a>$ which again is clearly isotropic.
Thus the quadratic form $q$ is isotropic over each field $F_{v}$
corresponding to a point of codimension 1 on $\X$.

Theorem \ref{localglobalquadratic} and   the result of  \cite{RST} show that there
must exist another completion $F_{v}$, corresponding to a codimension 1
point on another model of $\bbP^1_{\bbZ_{p}}$, at which the form is
anisotropic.

Note that on $\bbP^1_{\bbZ_{p}}$, the divisor 
associated to the quadratic form does not have normal crossings
at the point defined by the ideal $(p,t) \subset \bbZ_{p}[t]$
(compare the proof of Theorem \ref{localglobalquadratic}).
It is thus natural to blow up the corresponding point. In practice,
one introduces a new variable $x$ and one sets $t=px$.
The quadratic form $q$ now reads $<-a,-p,ap,px,ap(1-x),-ax(1-x)>$.
At the prime ideal $p$ of $\bbZ_{p}[x]$, with residue field $\bbF_{p}(x)$,
the two residue forms are $<-a,-a(1-x)>$ and  $<-1,a,x,a(1-x)>$.
Since $x-1$ is not a square in $\bbF_{p}(x)$, the first form is clearly anisotropic.
 As for the second one, 
the two residue forms of  $<-1,a,x,a(1-x)>$  at  the valuation of $\bbF_{p}(x)$  with uniformizing 
parameter $1/x$ are anisotropic over $\bbF_{p}$, because 
  $a$ is not a square. 
  
  At any closed point of $\bbP^1_{\bbZ_{p}}$ different from the point defined by
 $(p,t) \subset \bbZ_{p}[t]$ the form $q$ admits a reduction of the shape $<a,-a>$,
 hence it is isotropic over the fraction field of the complete  local ring at such a point.
 The $\bbZ_{p} $-homomorphism  $\bbZ_{p}[t] \to   \bbZ_{p}[x]$ sending $t$ to $px$
 sends the ideal $(p,t)$ of $\bbZ_{p}[t] $  to the ideal $p$ of  $\bbZ_{p}[x]$.
 It    induces an injective   homomorphism of the corresponding complete  local rings,
 hence an embedding of their fraction fields.
 The above argument shows that $q$ is anisotropic over the bigger fraction field.
It is is thus anisotropic over the fraction field of the completion of  $\bbZ_{p}[t] $
at the maximal ideal $(p,t)$.
  \end{remark}

\begin{remark}

 {\it The following question was raised by D. Harbater. Let $A,K,X$ and $F=K(X)$ be
 as in Theorem \ref{localglobalquadratic}.
Suppose a projective homogeneous variety $Z$ over $F$
under an $F$-rational connected linear algebraic group
has points  in the field of fractions of the completions at closed points of all possible  regular proper models of $X$ over $A$. Does $Z$ admit a rational point over $F$?
 The following example gives a  negative answer to this question, already with $Z$ a quadric.}

Let $p$ be an odd prime, let ${\mathcal X}/\bbZ_{p}$ be a smooth curve over
$\bbZ_{p}$ of relative genus at least 1. 
Let $X/\bbQ_{p}$ be its generic fibre and let
 $Y/\bbF_{p}$ be its special fibre. There exist two quaternion division algebras
$H_{1}$ and $H_{2}$ over the function field $\bbF_{p}(Y)$ 
whose ramification loci on $Y$ 
 are disjoint.
Let  
$\overline{q}_1$ and $\overline{q}_2$ be the 
reduced norms  attached to these two quaternion division algebras.
Let $q_1, q_2$  be lifts of these quadratic forms to the local ring $R$  of the
generic point of $Y$ on $\mathcal{X}$. 
 Consider the quadratic form $q=q_{1}\perp p.q_{2}$
over the function field $F=\bbQ_{p}(X)$. This form is isotropic over the fraction field
of the local ring of any closed point $P \in \mathcal{X}$. Indeed at any such point
either the form $q_1$ or the form $q_2$ has good reduction.

 On the other hand, the form $q$ is anisotropic over $F$ since
both $q_1$ and $q_2$ are anisotropic over the $p$-adic completion
of the local ring $R$, whose uniformizing parameter is $p$.

Since the genus of $X$ is  at least 1, by a well known result of
Shafarevich \cite{Sha} and Lichtenbaum \cite{Licht1},
 the curve 
${\mathcal X}/\bbZ_{p}$ is an absolute minimal model of $X$ over $\bbZ_{p}$.
Thus if $\mathcal{X}'/\bbZ_{p}$ is another model, there is a birational
$\bbZ_{p}$-morphism $\mathcal{X}' \to \mathcal{X}$.
The argument given above
then shows that  $q$ is isotropic on the fraction field of the local ring
of any closed point 
 of $\mathcal{X}' $.
\end{remark}

\begin{remark}

Let $F$ be a function field in one variable over a $p$-adic field $k$.

For quadratic forms in 3 or 4 variables, there is a refined local-global principle for isotropy of quadratic forms : 
one only needs to take into account
 discrete valuations which are  trivial on $K$.  The case of 3 variables is a consequence of
a theorem of Lichtenbaum \cite{Licht2}, based on Tate's duality theorem for abelian varieties over a
 $p$-adic field : {\it for $X/K$ a smooth projective geometrically connected curve $X$ over a $p$-adic
 field $K$, if an element of the Brauer group of $X$   vanishes after evaluation 
 at each closed point of $X$, then it is zero}. The case of forms in 4 variables follows from the case
 of 3 variables by passing over to the discriminant extension of the 4-dimensional form.
This  should be compared with Theorem  \ref{localglobalquadratic}. 
  As Remark \ref{notsuitable} shows, such a refined local-global principle does not hold for forms in 6 variables.  
It actually does not hold for forms in 5, 6, 7 or 8 variables, as the following argument shows.

Let $K$ be a $p$-adic field.
Suppose we are given a  smooth complete  intersection  $Y$ of two quadrics 
given by a system of two quadratic forms $f=g=0$ in projective space $\bbP^n_K$
such that  $Y(K)=\emptyset$.
By a theorem due independently to Amer (unpublished) and to Brumer \cite{Brumer},
the quadratic form $f+tg$ in $n+1$ variables over  the field $K(t)$  then does not have a nontrivial zero. 
 The hypothesis of smoothness of $Y$ ensures that  over any  completion $F_v$ of $F=K(t)$
at a place trivial on $K$, 
the form $f+tg$ contains a good reduction subform of rank at least  $n$.
Since $K$ is $p$-adic, for $n \geq 5$, such a form has a nontrivial zero.
Hence for $n \geq 5$, that is from 6 variables onwards, the form $f+tg$
over $F=K(t)$ has a nontrivial zero in each completion of $F$ at a place
trivial on $K$.  It remains to exhibit such systems of forms as above.
By a classical compacity argument, to prove the existence of such a smooth $Y$,
it is enough to produce an arbitrary complete intersection of two quadrics
in $\bbP^n_K$ with no $K$-point. But that is easy. Let $f(x_1,x_2,x_3,x_4)$ be the  norm form of 
the nontrivial quaternion algebra over $K$. Then the system
$f(x_1,x_2,x_3,x_4)=0, f(x_5,x_6,x_7,x_8)=0$ defines such a complete intersection
in $\bbP^7$, and one gets suitable systems in lower dimensional projective space
by letting some variables vanish.   With $f$ as above, $m$ a suitable integer, and $h_1$ and $h_2$ suitable diagonal quadratic forms, one can produce a smooth complete intersection without $K$-point of the shape
$$f(x_1,x_2,x_3,x_4)+p^mh_1(x_5,x_6,x_7,x_8)=0, \hskip3mm p^mh_2(x_1,x_2,x_3,x_4)+ f(x_5,x_6,x_7,x_8)=0.$$

 There also exist smooth intersections of two quadrics $$f(x_1,x_2,x_3,x_4,x_5)=g(x_1,x_2,x_3,x_4,x_5)=0$$  in $\bbP^4_{K}$ 
 such that at any completion  $F_v$ of $F=K(t)$ at a place trivial on $K$ 
 the form $f+tg$ has an $F_{v}$-point but has no $F$-point.
 Here is one example. Let $p$ be a prime, $p \equiv 1 \ {\rm mod} \ 4$.
 Let $K=\mathbb Q_{p}$, $u \in \mathbb Z_{p}^*$ a unit which is not a square and $s\geq 2$ an integer.
 Then take $$f=x_{1}^2+ux_{2}^2+px_{3}^2+up^{2s}x_{4}^2+p^{2s-2}x_{5}^2$$
 and $$g=p^{4s+1}x_{1}^2+p^{4s}x_{2}^2+up^{2s}x_{3}^2+ x_{4}^2+px_{5}^2.$$
 One immediately checks that this defines a smooth complete intersection  $Y$  in $\bbP^4_{\mathbb Q_{p}}$.
 The system $f=g=0$ has no primitive solution modulo $p^2$, hence $Y$ has no rational point
over $\mathbb Q_{p}$. Let $v$ be a place of $F$ trivial on $K$.
If $f+tg$ has good reduction at $v$, then its reduction has rank 5 over the residue field,
hence is isotropic. The places $v$ at which  $f+tg$ has bad reduction are 5 distinct rational points
of $\Spec \mathbb Q_{p}[t]$. At each of these completions, $f+tg$  has a good reduction subform whose
reduction is   isotropic   of rank 4 over $\mathbb Q_{p}$.

\end{remark}

\section{A local-global principle for principal homogeneous spaces under certain linear algebraic groups} \label{}

Given a scheme $X$ and a smooth $X$-group scheme $G$, we let $H^1(X,G)$ 
denote the first \v{C}ech cohomology set for the \'etale topology on $X$.

The following lemma is  known (\cite[Lemma 4.1.3]{H0}).
 
\begin{lem}\label{cartesiandensity}
Let $A$ be a discrete valuation ring, $K$ its fraction field, $\hat{A}$ its completion and
$\hat{K}$ the field of fractions of $\hat{A}$. Let $G/A$ be a reductive group (with  connected fibres).  
 Then the fibre product of $H^1(K,G)$ and $H^1(\hat{A},G)$ over $H^1(\hat{K},G)$
is $H^1(A,G)$. 
\end{lem}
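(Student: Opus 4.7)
The lemma is a Beauville--Laszlo type patching statement for torsors under the smooth affine $A$-group $G$, resting on the ring-theoretic identity $A=K\times_{\hat K}\hat A$. I would prove that the natural map $\Phi\colon H^1(A,G)\to H^1(K,G)\times_{H^1(\hat K,G)}H^1(\hat A,G)$ is bijective by treating injectivity and surjectivity separately.

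\emph{Injectivity of $\Phi$.} Suppose $P$ and $Q$ are $G$-torsors over $A$ with $\Phi([P])=\Phi([Q])$. I would form the twisted torsor $T=\underline{\mathrm{Isom}}_G(P,Q)$; this is an affine smooth $G$-torsor over $A$ equipped with compatible sections $s_K\in T(K)$ and $s_{\hat A}\in T(\hat A)$ whose images in $T(\hat K)$ coincide. For any affine flat $A$-scheme $T$, Beauville--Laszlo applied to its coordinate algebra yields the functorial identity $T(A)=T(K)\times_{T(\hat K)}T(\hat A)$. Hence the pair $(s_K,s_{\hat A})$ descends to an $A$-point of $T$, trivialising it and producing an isomorphism $P\oi Q$ over $A$.

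\emph{Surjectivity of $\Phi$.} Given torsors $P_K$ over $K$ and $P_{\hat A}$ over $\hat A$ together with an identification $\phi\colon P_K\otimes_K\hat K\oi P_{\hat A}\otimes_{\hat A}\hat K$, I would pass to the coordinate algebras $R_K$ and $R_{\hat A}$, which are flat over their respective bases. A first application of Beauville--Laszlo to $(R_K,R_{\hat A},\phi)$ yields a flat $A$-algebra $R$ with $R\otimes_A K\simeq R_K$ and $R\otimes_A\hat A\simeq R_{\hat A}$. A second application, this time to $R\otimes_A\Gamma(G,\O_G)$, glues the two $G$-coactions into a single coaction on $R$, hence a $G$-action on $\Spec R$. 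The property of $\Spec R\to\Spec A$ being a $G$-torsor can finally be checked after the faithfully flat base change to $\hat A$, where it holds by hypothesis.

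The step I expect to require the most care is not Beauville--Laszlo itself but the verification that the patching is compatible with the $G$-action and that the torsor condition descends properly along the faithfully flat cover $\Spec K \sqcup \Spec \hat A \to \Spec A$. None of this is deep once one uses that $G$ is smooth affine, but assembling it does demand some bookkeeping.
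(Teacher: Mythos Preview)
Your injectivity argument has a genuine gap. From $\Phi([P])=\Phi([Q])$ you only know that $P_K\simeq Q_K$ and $P_{\hat A}\simeq Q_{\hat A}$ \emph{separately}; nothing forces the resulting sections $s_K\in T(K)$ and $s_{\hat A}\in T(\hat A)$ of $T=\underline{\mathrm{Isom}}_G(P,Q)$ to agree over $\hat K$. Their difference is an element of $\mathrm{Aut}(P_{\hat K})={}_PG(\hat K)$, and absorbing it requires a factorisation ${}_PG(\hat K)={}_PG(K)\cdot{}_PG(\hat A)$. That factorisation is true (density of $K$-points in $\hat K$-points for the smooth connected group ${}_PG$, openness of ${}_PG(\hat A)$ in ${}_PG(\hat K)$), but you have not argued it; as written, the step simply asserts what has to be proved. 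Note also that your outline invokes only ``smooth affine'', whereas the reductivity hypothesis must enter somewhere: in the paper it enters precisely here, via Nisnevich's theorem that $H^1(A,G)\to H^1(K,G)$ is injective for reductive $G$ over a discrete valuation ring. In your framework the same content would have to appear as the double-coset decomposition above.

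Put categorically: Beauville--Laszlo gives an equivalence of \emph{groupoids} of torsors, i.e.\ $H^1(A,G)$ is the $2$-fibre product. Passing to $\pi_0$ yields a surjection onto the ordinary fibre product of the $H^1$ sets, but injectivity is exactly the double-coset statement for automorphism groups of torsors, which you have skipped.

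Your surjectivity argument via Beauville--Laszlo is sound and is a genuinely different route from the paper's. The paper embeds $G\hookrightarrow\GL_{n,A}$, sets $Z=\GL_n/G$, identifies $H^1(B,G)$ with $\GL_n(B)\backslash Z(B)$ for local $B$, and then moves a lift $x\in Z(K)$ into $Z(\hat A)$ by an element of $\GL_n(K)$ using density of $\GL_n(K)$ in $\GL_n(\hat K)$ and openness of $Z(\hat A)$; one then uses $Z(K)\cap Z(\hat A)=Z(A)$. Your approach avoids the auxiliary quotient but trades it for the module-theoretic gluing of coordinate rings and coactions.
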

\begin{proof}
Let $G \subset {\rm GL}_{n,A}$ be a closed embedding  of $A$-groups and let $Z/A$ denote the
quotient ${\rm GL}_{n,A}/G$ (see  \cite[Cor. 6.12]{CTSa}).
For any local $A$-algebra $B$, by \cite[III, 3.2.4 and 3.2.5]{Giraud} we have an exact sequence of pointed sets
$$ {\rm GL}_{n}(B) \to Z(B) \to H^1(B,G) \to 1.$$
More precisely, the natural map $Z(B) \to H^1(B,G)$ induces a bijection
between the quotient of $Z(B)$  by the left action of $GL_{n}(B)$
and the set $H^1(B,G)$.
That the natural map $H^1(A,G) \to H^1(K,G)$ is injective is a theorem of Nisnevich
\cite{Nis}: there it is proven that the kernel is trivial for any reductive $A$-group $G$,
a known twisting argument (an \'etale cohomology variant of \cite{SCG} \S 5.4 p.~47, or \cite{KMRT} 28.9 p.~367)
 then implies that the map is injective.
Let  $x \in Z(K)$ be a lift of $\xi \in H^1(K,G)$. If the image of $\xi$
in $H^1(\hat{K},G)$ comes from $H^1(\hat{A},G)$, then there exists
 $y \in Z(\hat{A})$ and $\rho \in {\rm GL}_{n}(\hat{K})$ such that $ \rho.x= y \in Z(\hat{K})$.
 The set ${\rm GL}_{n}(K)$ is dense in ${\rm GL}_{n}(\hat{K})$, the map 
 ${\rm GL}_{n}(\hat{K}) \to Z(\hat{K}) $ is continuous, and $Z(\hat{A})$ is open in $Z(\hat{K})$.
 We may thus find $g \in {\rm GL}_{n}(K)$ close enough to $\rho$ that $g.x$ lies in 
 $Z(\hat{A})$. Now $g.x$ lies in $Z(K) \cap Z(\hat{A}) = Z(A)$.
\end{proof}
 
\bigskip
 
{\it  Unramified classes}
 
 \smallskip
 Given $G/A$ as above, and $\xi_{K} \in H^1(K,G)$, one says that $\xi_{K}$
 is unramified at $A$ if it lies in the image of $H^1(A,G)$. 
 By the above lemma, it then comes from a well defined element $\xi_{A} \in H^1(A,G)$.
 By the same lemma, the condition is equivalent to requiring that the image of
 $\xi_{K}$ in $  H^1(\hat{K},G)$ comes from a well defined element  $\xi_{\hat{A}} \in    H^1(\hat{A},G)$.
 Let $k$ denote the residue class field of $A$ and $\hat{A}$.
 If a class $\xi_{K} \in H^1(K,G)$ is unramified, one may then consider
 its evaluation $\xi_{k} \in H^1(k,G_{k})$. It is given by the image of
 $\xi_{A}$, or  of $\xi_{\hat{A}}$, in $H^1(k,G_{k})$.

\begin{thm}\label{mainprop} Let $A$ be a complete discrete valuation ring, $K$ its field of fractions
and $k$ its  
residue field.  
Let $\X/A$ be a projective, flat curve over ${\rm Spec} \ A$. Assume that $\X$
is connected and regular. Let $F$ be the function field of $\X$. Let $G/A$ be a   (fibrewise connected)
reductive group. Let $\xi_F \in H^1(F,G)$ be a class which is unramified at 
all
codimension 1
points of $\X$.

(i)  There exists $\xi \in H^1(\X,G)$ whose image in
$H^1(F,G)$ is $\xi_{F}$.

(ii) If moreover  the (reduced) components of the special fibre
are regular, and for any such component $Y$  the image   $\xi_{k(Y)}$ in $H^1(k(Y),G)$ is trivial,
then  at any point $P$ of codimension 1 or 2 of $\X$, with residue field $\kappa(P)$,
 the image
$\xi_{P} \in H^1(\kappa(P),G)$ is trivial.

(iii) If moreover
 there exists a connected linear algebraic group $H/F$
such that the $F$-group $ (G\times_{A}F)\times_{F} H $ is an $F$-rational variety,
then
 $\xi_{F} =1 \in H^1(F,G)$.\end{thm}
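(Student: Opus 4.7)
The plan is to prove the three parts in order. For part (i), I would apply Lemma~\ref{cartesiandensity} at each codimension one point $y \in \X$: the unramifiedness of $\xi_F$ at $\O_{\X,y}$ yields a unique lift $\xi_y \in H^1(\O_{\X,y}, G)$, and the uniqueness (via the injectivity $H^1(\O_{\X,y}, G) \hookrightarrow H^1(F,G)$) permits gluing of these local lifts with $\xi_F$ into a class $\xi_U \in H^1(U,G)$ on an open $U \subset \X$ whose complement is a finite set of closed points. To extend across these remaining closed points I would invoke the purity of $H^1$ with reductive coefficients on a regular two-dimensional scheme: at any closed $P$, the restriction $H^1(\Spec\O_{\X,P},G) \to H^1(\Spec\O_{\X,P}\setminus\{P\},G)$ is bijective. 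Gluing delivers the required $\xi \in H^1(\X,G)$.

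For part (ii), I would exploit the class $\xi$ from (i). At each reduced regular component $Y$ of the special fibre, the hypothesis $\xi_{k(Y)} = 1$ and Nisnevich's injectivity for reductive torsors on the regular one-dimensional scheme $Y$ force $\xi|_Y = 1$, hence $\xi_P = 1$ at every codimension two point of $\X$ (such points are closed and lie on some $Y$). For a codimension one point $P$ of the generic fibre $X_K$, the closure $\overline{P} = \Spec B$ is an integral proper one-dimensional $A$-subscheme of $\X$; the completeness of $A$ and irreducibility of $\overline{P}$ force $B$ to be a complete local one-dimensional domain, whose normalization $B'$ is a complete discrete valuation ring with fraction field $\kappa(P)$ and residue field $k'$. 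Hensel's lemma for smooth $G$ gives $H^1(B', G) \cong H^1(k', G)$; since $\xi|_{k'}$ is the base change of the trivial class $\xi_Q$ at the unique closed point $Q$ of $\overline{P}$, we conclude $\xi|_{B'} = 1$ and then $\xi_P = 1$ by passing to the generic point.

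For part (iii), I would verify the HHK hypotheses at the patching overfields $F_P$ and $F_U$. Since $\hat{R}_P$ is a two-dimensional complete local ring with residue field $\kappa(P)$ and $G$ is smooth, $H^1(\hat{R}_P,G) \cong H^1(\kappa(P),G)$; combined with part (ii) this forces $\xi_{F_P} = 1$. For $F_U$, the $s$-adically complete pair $(\hat{R}_U, (s))$ is Henselian for smooth group schemes, reducing the triviality of $\xi|_{\hat{R}_U}$ to that of $\xi|_{k[U]}$; Nisnevich injectivity on the regular affine curve $\Spec k[U]$ together with the hypothesis $\xi|_{k(U)} = 1$ establishes this, so $\xi_{F_U} = 1$. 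Since \cite[Thm.~3.7]{HHK} requires an $F$-rational acting group, I would pass from the $G$-torsor $Y$ of class $\xi_F$ to the torsor $Y \times_F H$ under $(G \times_A F) \times_F H$, which still has points over every $F_P$ and $F_U$ and whose structure group is $F$-rational by hypothesis. HHK then gives an $F$-point of $Y \times_F H$, which projects to an $F$-point of $Y$; hence $\xi_F = 1$.

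The main obstacle is the HHK step in part (iii): \cite[Thm.~3.7]{HHK} is stated for an $F$-rational acting group, whereas here only the product $(G\times_A F)\times_F H$ is assumed rational. This is handled by replacing $Y$ by $Y \times_F H$, whose $F$-points correspond bijectively to those of $Y$. A secondary technical ingredient is the two-dimensional purity statement for reductive group torsors on a regular scheme used in part (i), which relies crucially on the smoothness of $G$ and the regularity of $\X$.
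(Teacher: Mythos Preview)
Your approach to parts (i) and (ii) is the paper's: extend $\xi_F$ to $\xi \in H^1(\X,G)$ via Nisnevich-at-DVRs plus two-dimensional purity (the paper cites \cite[Prop.~6.8 and Thm.~6.13]{CTSa} for this), then evaluate $\xi$ at closed points using regularity of the components and a complete-DVR argument for closed points of the generic fibre. One small overclaim in (ii): Nisnevich injectivity holds for each local ring $O_{Y,P}$, not for the full curve $Y$ --- already for $G=\bbG_m$ the kernel of $H^1(Y,G)\to H^1(k(Y),G)$ is $\bbPic(Y)$, typically nonzero. Since you only need $\xi_P=1$ at each closed $P$, the local statement suffices and your conclusion survives; the paper argues exactly this way.

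The genuine gap is in part (iii). Your claimed ``Nisnevich injectivity on the regular affine curve $\Spec k[U]$'', i.e.\ injectivity of $H^1(k[U],G)\to H^1(k(U),G)$, fails for general reductive $G$: for $G=\bbG_m$ the kernel is $\bbPic(U)$, which need not vanish on an arbitrary affine open of a component of the special fibre. So you cannot pass from $\xi_{k(U)}=1$ to $\xi_{k[U]}=1$ for an unspecified $U$, and you have not said how $S$ (and hence the $U$'s) are chosen. The paper repairs this by \emph{first} spreading out: since $\xi_{k(Y)}=1$, a generic section extends to a dense open $U_Y\subset Y$ on which $\xi$ is already trivial. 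One then enlarges the finite set $S$ to contain the complement of $\bigcup_Y U_Y$ (and all intersection points of components), and invokes \cite[Prop.~6.6]{HH} to realize such an $S$ as $f^{-1}(\infty_k)$ for a finite $A$-morphism $f:\X\to\bbP^1_A$; the HHK patches $U$ are then open subsets of the $U_Y$, so $\xi|_U=1$ by construction rather than by any injectivity statement. With $S$ so chosen, your Henselian lifting to $\hat R_U$ and $\hat R_P$ is correct, and your device of passing to $E\times_F H$ under the $F$-rational group $(G\times_AF)\times_F H$ is precisely what \cite[Cor.~3.8]{HHK} packages.
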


\begin{proof}
(i)  By definition of an unramified class, for each point $P$ of codimension 1
on $\X$ there exists $\xi_P \in H^1(O_{\X,P},G)$ with image $\xi_{F}$
over the fraction field $F$ of $O_{\X,P}$. By   Nisnevich's theorem \cite{Nis},
the class $\xi_P $ is uniquely defined.

There then exists a Zariski open set  $V \subset \X$
which contains all points of dimension 1 of $\X$ and an element of $H^1(V,G)$
with image $\xi$ in $H^1(F,G)$ (see the proof  of  \cite[Prop. 6.8]{CTSa}).
 Since $\X$ is regular and of dimension 2,
  \cite[Thm. 6.13]{CTSa}
 shows that one may take $V=\X$.
We thus have a class $\xi \in H^1(\X,G)$ with image $\xi_F$ in $H^1(F,G)$.
In other terms, we have a torsor $E$ over $\X$ under the $\X$-group scheme $G_{\X}=G\times_{A}\X$,
whose restriction over the generic point of $\X$  has  class $\xi_{F} \in H^1(F,G)$.

\medskip

(ii) Let   $P$ be  a closed point of the special fibre.
Let $k(P)$ denote the residue field at $P$.
Let $Y$ be a  component of the special fibre which contains $P$.
Since we assumed the components to be regular,
the local ring $O_{Y,P}$ is a discrete valuation ring.
The image of the class of $\xi $ in $H^1(O_{Y,P},G)$
is now trivial because  its image in $H^1(k(Y),G)$ is trivial (easy case of \cite{Nis}).
Hence the image of $\xi$ in $H^1(k(P),G)$ is trivial.

Now let $P$ be a closed point of the generic fibre. 
Let $B$ denote the integral closure of $A \subset K$ in the residue field $K(P)$.
This is a complete discrete valuation ring.
There exists
an $A$-morphism   $\Spec  B \to \X$ which extends the inclusion
of $P$ in $X$. The image of the special point of $\Spec B$
is a point in the special fibre of $\X/A$, hence it lies on some component $Y$. By the above
argument, the evaluation of $\xi$  at that point is trivial.
Thus the inverse image of  $\xi$ on $\Spec B$ is a $G$-torsor with trivial
special fibre. By Hensel's lemma on the complete local ring $B$,
this inverse image is trivial. Hence $\xi_{P}$  is trivial at  $P$, which is the generic point
of $\Spec B$.

\medskip

(iii) The hypothesis in (ii)  ensures that for each component $Y$ of the special fibre
there exists a dense open set $U_Y \subset Y$ such that
the restriction of $\xi$ to $U_Y$ is trivial.

We may assume that each $U_Y$ meets no component but $Y$.
The complement of the union of $U_Y$'s  in the special fibre is a finite set $S$ of
points.

By Proposition 6.6 of \cite{HH}, there exists a finite $A$-morphism
 $f : \X \to  \bbP^1_{A}$ with  $S \subset f^{-1}(\infty_{k})$.
 One now replaces the family of $U_{Y}$'s by the  family $\cal{U}$ of irreducible components
 of $f^{-1}(\bbA^1_{k})$. This replaces each $U_{Y}$ by a nonempty affine open set of $U_{Y}$ and one
 replaces $S$ by $f^{-1}(\infty_{k})$.  

By (ii), for each closed point $P$ of the special fibre, for $E$ as in (i) we have
$E(k(P))\neq \emptyset$. Since $E/\X$
is smooth, this implies $E(\hat{R}_{P}) \neq \emptyset$, hence
$E(F_{P}) \neq \emptyset$ (notation as  in  Theorem \ref{localglobalquadratic} and as in \cite{HH}).

 For each open set $U=U_{Y}$, the restriction of $E$ over $U$ is a trivial $G_{U}$-torsor.
 Since $E/\X$ is smooth, this implies   $E(\hat{R}_{U}) \neq \emptyset$ hence $E(F_{U}) \neq \emptyset$.

 Since $E\times_{\X}F$ is a principal homogeneous space under the $F$-algebraic group
 $G\times_{A}F$, for any field extension $L$ of $F$,  the group $G(L)$ acts transitively on $E(L)$.
  An application of  Theorem \cite[Theorem 3.7 and Corollary 3.8] {HHK}   now yields  $E(F) \neq \emptyset$, i.e. $\xi =1 \in H^1(F,G)$.
\end{proof}

\begin{thm}\label{generalizedGroth}
Let $A$ be a complete discrete valuation ring, $K$ its field of fractions
and $k$ its  
residue field.  
Let $\X/A$ be a projective, flat curve over ${\rm Spec} \ A$. 
Assume that $\X$
is connected and regular.
Let $F$ be the function field of $X$.
Let $\Omega$ be the set of all discrete valuations on $F$.

(i) Let $G/A$ be a  (fibrewise connected) reductive group.
If  
 there exists a connected linear algebraic group $H/F$
such that the $F$-group $ (G\times_{A}F)\times_{F} H $ is an $F$-rational variety,
 then
the restriction map with respect to completions
$ H^1(F,G) \to \prod_{v \in \Omega} H^1(F_{v},G)$
has a trivial kernel.

(ii) The restriction map of Brauer groups
${\rm Br} \ F \to \prod_{v \in \Omega} {\rm Br}Ê\ F_{v}$ has a trivial kernel.
 \end{thm}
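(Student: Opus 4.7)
The plan is to deduce (i) from Theorem \ref{mainprop} after passing to a sufficiently nice regular model, and then to reduce (ii) to (i) by applying it with $G=\bbPGL_{n}$.

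For (i), I would start with a class $\xi_{F}\in H^1(F,G)$ lying in the kernel of the restriction map. The first preparation is to replace $\X$ by a regular projective model $\X'$ of $F$ over $A$ whose reduced special fibre has regular components; this is a standard application of Lipman's desingularization for excellent two-dimensional schemes. Since this replacement alters neither $F$ nor the set $\Omega$, the triviality hypothesis on $\xi_{F}$ is preserved. Next, for each codimension-one point $x$ of $\X'$, I would combine Lemma \ref{cartesiandensity} with Nisnevich's injectivity to deduce, from the triviality of $\xi_{F}$ over the associated completion $F_{v}$, that $\xi_{F}$ is unramified at $x$ and that its specialization in $H^1(\kappa(x),G)$ is trivial. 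Applied to the generic point of each component $Y$ of the special fibre, this furnishes the hypothesis $\xi_{k(Y)}=1$ required by Theorem \ref{mainprop}(ii). Together with the rationality assumption, Theorem \ref{mainprop}(iii) then gives $\xi_{F}=1$.

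For (ii), given $\alpha\in\Br F$ in the kernel, I would represent it by a central simple $F$-algebra $D$ of degree $n$, whose class $[D]\in H^1(F,\bbPGL_{n})$ has trivial image in every $H^1(F_{v},\bbPGL_{n})$ because $D\otimes_{F}F_{v}$ is a matrix algebra. The group scheme $\bbPGL_{n,A}$ is fibrewise connected reductive and split over every field, so $\bbPGL_{n,F}$ is $F$-rational via the Bruhat big cell $U^{-}\cdot T\cdot U^{+}$ (a product of affine spaces and a split torus). Thus the rationality hypothesis in (i) holds with $H$ the trivial group, and (i) applied to $G=\bbPGL_{n,A}$ yields $[D]=1$, hence $D\simeq M_{n}(F)$ and $\alpha=0$.

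The main obstacle I expect is the blow-up step in (i): one must produce a regular model with regular special-fibre components and then verify that the hypothesis on $\xi_{F}$ transfers. The first point relies on Lipman's theorem; the second is automatic since $\Omega$ depends only on $F$. The remaining work is formal, consisting in translating the hypothesis of triviality over each $F_{v}$ into the pointwise hypothesis required by Theorem \ref{mainprop}(ii) via Lemma \ref{cartesiandensity}.
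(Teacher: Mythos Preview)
Your proposal is correct and follows the paper's route: deduce (i) from Theorem~\ref{mainprop}, then (ii) from (i) applied to $G=\bbPGL_{n}$. You are in fact more explicit than the paper's one-line proof in spelling out the passage to a model with regular special-fibre components (needed since Theorem~\ref{mainprop}(iii) depends on part~(ii)) and the use of Lemma~\ref{cartesiandensity} to convert triviality over $F_{v}$ into unramifiedness with trivial specialization at each codimension-one point---details the paper leaves to the reader.
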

\begin{proof}
Statement (i) immediately follows from the previous theorem.
As for statement (ii), it  follows from (i) applied to the
 projective linear groups ${\rm PGL}_{n}$.
\end{proof}

   \begin{remark}\label{totallysplit} 
 Using  totally split unramified coverings of  models of Tate curves over a $p$-adic field 
 (see \cite{saito}), one sees that Theorem  \ref {generalizedGroth}~(i)
 does  not   in general  hold  for nonconnected groups, for example for $G={\mathbb Z}/2$.
 A concrete example is given by the ellliptic curve $E$  with affine equation $y^2=x(1-x)(x-p)$
 over the $p$-adic field ${\mathbb Q}_{p}$ ($p$ odd). The rational function $1-x$
 is not a square in the function field  $F={\mathbb Q}_{p}(E)$, but it becomes a square in each completion $F_{v}$
 of $F$. This example is discussed in the appendix to this paper (Section \ref{Appendix}).
 This implies that the patching results of \cite{HHK} in general do not extend to nonconnected groups.
 \end{remark}

 \begin{lem}\label{Agroups} 
 Let $A$ be the ring of integers of a $p$-adic field $K$,
 let $k$ be its
residue field. Let $G/A$ be a  (fibrewise connected) reductive group. 
Then there exists a connected linear algebraic group $H/K$ such that
the $K$-group $(G\times_{A}K) \times_{K} H$ is $K$-rational.
 \end{lem}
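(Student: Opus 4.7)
My plan is to reduce the problem to the case of an algebraic torus via Bruhat decomposition, and then to construct $H$ from a Weil restriction of that torus.

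First I would show that $G/A$ is $A$-quasi-split. Since the residue field $k$ is finite, Lang's theorem applied to the reductive group $G_{k}=G\times_{A}k$ produces a Borel subgroup $B_{k}\subset G_{k}$ defined over $k$. The scheme of Borel subgroups of $G$ is smooth and projective over $A$, so by Hensel's lemma for the complete local ring $A$ the Borel $B_{k}$ lifts uniquely to a Borel $B\subset G$ defined over $A$. Fix a maximal $A$-torus $T\subset B$, so that $B=T\ltimes U$ with $U$ the unipotent radical. The Bruhat big cell in the quasi-split $K$-group $G_{K}$ is then a dense open subvariety $U^{-}_{K}\times T_{K}\times U_{K}$ of $G_{K}$, which is $K$-isomorphic to $T_{K}\times \mathbb{A}_{K}^{2N}$ (since the unipotent radicals $U,U^{-}$ are $K$-rational). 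Hence $G_{K}$ is $K$-birational to $T_{K}\times \mathbb{A}_{K}^{2N}$, and so $(G\times_{A}K)\times_{K} H$ is $K$-rational if and only if $T_{K}\times_{K}H$ is $K$-rational. The problem is reduced to producing a rationalizer $H$ for the torus $T_{K}$.

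Second, the $A$-torus $T$ is classified by its continuous character module, a finitely generated free $\mathbb{Z}$-module carrying an action of $\pi_{1}(\mathrm{Spec}\,A)=\mathrm{Gal}(k^{\mathrm{sep}}/k)\cong \hat{\mathbb{Z}}$. Therefore $T$ splits over a finite unramified extension $A\subset A'$ corresponding to a cyclic Galois extension $L/K$, with $\Gamma:=\mathrm{Gal}(L/K)$ finite cyclic. I would then take $H$ to be the quotient torus $R_{L/K}(T_{L})/T_{K}$, where $T_{K}\hookrightarrow R_{L/K}(T_{L})$ is the canonical embedding (adjoint to the identity of $T_{L}$). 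The Weil restriction $R_{L/K}(T_{L})$ is a quasi-trivial torus, since $T_{L}$ is a split torus over $L$, and is in particular $K$-rational. It is a $T_{K}$-torsor over $H$; granting generic triviality of this torsor over $K(H)$, one obtains a $K$-birational equivalence $R_{L/K}(T_{L})\sim_{\mathrm{bir}} T_{K}\times H$, whence $T_{K}\times H$ is $K$-rational.

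The principal obstacle is this last step: establishing generic triviality of the $T_{K}$-torsor $R_{L/K}(T_{L})\to H$. This rests on the special structure of $\mathbb{Z}[\Gamma]$-lattices for cyclic $\Gamma$ (where flasque/coflasque resolutions are especially well behaved), combined with arithmetic input from the $p$-adic field $K$ --- in particular local duality of Tate--Nakayama type --- to control the relevant Galois cohomology of the function field $K(H)$.
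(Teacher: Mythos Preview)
Your reduction to the torus case is correct and matches the paper: quasi-splitness of $G_K$ via Lang plus Hensel, the big cell $U^-\times T\times U^+$, and the observation that $T$ splits over an unramified, hence cyclic, extension $L/K$. The gap is in your choice of $H$ and the ``generic triviality'' step.

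Take $K$ a $p$-adic field, $L/K$ the unramified quadratic extension, and $T=R^{(1)}_{L/K}\bbG_m$ the norm-one torus (this is an $A$-torus, so it is a legitimate $G$). Then $T_L\simeq\bbG_{m,L}$, so $R_{L/K}(T_L)=R_{L/K}\bbG_m$, and your quotient $H=R_{L/K}\bbG_m/T$ is $\bbG_m$ via the norm. The $T$-torsor $R_{L/K}\bbG_m\to\bbG_m$ is generically trivial over $K(H)=K(t)$ iff $t$ is a norm from $L(t)$. But $L(t)/K(t)$ is unramified of degree $2$ at the place $t=0$, so norms have even $t$-adic valuation there; hence $t$ is not a norm and the torsor is \emph{not} generically trivial. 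Your appeal to Tate--Nakayama duality cannot help: $K(H)$ is a function field in one variable over $K$, not a local field, and $H^1(K(H),T)$ is not controlled by local duality for $K$.

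The paper runs the resolution the other way. Instead of putting $T_K$ on the left of an exact sequence with quasi-trivial middle, it takes a flasque resolution $1\to Q\to P\to T_K\to 1$ with $P$ quasi-trivial and $Q$ flasque. The Endo--Miyata theorem (specific to cyclic $\Gamma$) says that a flasque $\Gamma$-lattice is a direct summand of a permutation lattice, so $Q$ is a direct factor of a quasi-trivial torus. Then $H^1(E,Q)=0$ for every field $E\supset K$ by Hilbert~90, which forces the $Q$-torsor $P\to T_K$ to have a rational section; hence $T_K\times_K Q$ is $K$-birational to the $K$-rational variety $P$, and one takes $H=Q$. The point is that generic triviality is automatic precisely because the \emph{kernel} torus, not $T_K$ itself, is invertible; your construction puts the uncontrolled torus $T_K$ in the kernel position, where no such vanishing is available.
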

 
 \begin{proof}
Let $Z/A$ be the $A$-scheme of Borel subgroups of $G$.
This is a proper and smooth scheme over $\Spec A$.
The special fibre $G_{0}=G\times_{A}k$ is a connected reductive group over the finite field $k$. 
Any such $k$-group is quasisplit (\cite[Chap. III, \S 2.2, Thm. 1]{SCG}). 
Thus $Z(k)\neq \emptyset$, hence $Z(A)\neq \emptyset$ by Hensel's lemma.
There thus exists a Borel  $A$-subgroup   $B \subset G$. 
Let  $T\subset B$ be
its maximal $A$-torus.
Over $K$, the $K$-group $G_{K}=G\times_{A}K$ contains the open set $U^+  \times_{K} U^-\times_{K} (T\times_{A}K)$,
where $U^+ \subset B_{K}$ is the unipotent radical of  
$B_{K}$ 
and $U^-$
is the unipotent radical of the opposite $K$-Borel subgroup of $B_{K} \subset G_{K}$.
Each of these unipotent radicals is $K$-isomorphic to an affine space over $K$.

Let the $k$-torus $T_{0}=T\times_{A}k$ be split by a Galois field extension $k'/k$.
There exists an 
 exact sequence of $k$-tori split by $k'/k$
$$1 \to Q_{0} \to P_{0} \to T_{0} \to 1,$$
where $P_{0}$ is a quasitrivial $k$-torus and $Q_{0}$ is a flasque $k$-torus
(Endo and Miyata, Voskresenski\u{\i}, cf. ~Ê\cite[\S 1, \S 5]{RET}, \cite[\S 0]{CTSflasque}).

Because $k$ is a finite field,  the field extension $k'$ of $k$ is cyclic. By a
theorem of Endo and Miyata (see \cite[Proof of Cor. 3 p.~200]{RET}), 
for any flasque $k$-torus $Q_{0}$ split by
a cyclic extension $k'$ of $k$, there exists  a
 $k$-torus $Q_{1}$ split by $k'$  such that $Q_{0} \times_{k}Q_{1}$ is $k$-isomorphic to a quasitrivial $k$-torus.
If we let $K'/K$ be the cyclic, unramified extension corresponding to $k'/k$, and  we let $A'/A$ 
be the finite, connected, \'etale
Galois cover given by the integral closure  of $A \subset K$ in $K'$,
the sequence of characters associated to the above exact sequence
enables us to produce a sequence of $A$-tori split by $A'/A$, 
$$1 \to Q \to P \to T \to 1,$$
hence in particular a sequence of $K$-tori split by $K'/K$
$$1 \to Q_{K} \to P_{K}  \to T_{K } \to 1,$$
with $Q_{K}$
 a direct factor of a quasitrivial $K$-torus and $P_{K}$ a  quasitrivial $K$-torus
(for basic facts on tori over arbitrary bases, including quasitrivial and flasque tori,
see  \cite[\S 0 and \S 1]{CTSflasque}). 

Because $Q_{K}$ is direct factor of a quasitrivial $K$-torus, Hilbert's theorem 90
implies that the projection $P_{K} \to T_{K}$ has a rational section, hence $Q_{K} \times_{K}T_{K}$
is $K$-birational to $P_{K}$, which is a $K$-rational variety.
Now the product $G_{K} \times_{K} Q_{K}$ is $K$-birational to $U^+  \times_{K} U^-\times_{K} P_{K}$,
which is a $K$-rational variety.
 \end{proof}

\begin{thm}\label{maincor}  Let $A$ be the ring of integers of a $p$-adic field $K$,
 let $k$ be its
residue field.
Let $\X/A$ be a projective, flat curve over $A$. Assume that $\X$
is connected and regular, and that the (reduced) components of the special fibre
are regular.
Let $F$ be the function field of $\X$.  
Let $G/A$ be a  (fibrewise connected) reductive group.  
 \smallskip

If a  class
in $H^1(F,G)$ is unramified at points of codimension 1 on  $\X$, then it is trivial.
\end{thm}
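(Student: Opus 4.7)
The plan is to combine Theorem \ref{mainprop} with Lemma \ref{Agroups}; the crux is verifying the hypothesis of Theorem \ref{mainprop}(ii) in the $p$-adic setting.

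First I would apply Theorem \ref{mainprop}(i) to extend the unramified class $\xi_F \in H^1(F, G)$ to some $\xi \in H^1(\X, G)$ with $\xi|_F = \xi_F$. The strategy is then to apply Theorem \ref{mainprop}(iii). This requires two ingredients: (a) the hypothesis of (ii), namely $\xi_{k(Y)} = 1$ for each reduced component $Y$ of the special fibre; and (b) a connected linear algebraic group $H/F$ such that $(G \times_A F) \times_F H$ is $F$-rational. Ingredient (b) is immediate from Lemma \ref{Agroups}: it provides $H_0/K$ making $(G \times_A K) \times_K H_0$ a $K$-rational variety, and setting $H := H_0 \times_K F$ preserves rationality under base change from $K$ to $F$.

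The hard part will be ingredient (a). By the regularity hypothesis on components, and since $k$ is a finite (hence perfect) field, each $Y$ is a smooth projective curve over $k$. Since $\xi$ lives on all of $\X$, the class $\xi_{k(Y)}$ is unramified at every closed point $P$ of $Y$: its image in $H^1(\hat{O}_{Y,P}, G_k)$ maps by Hensel to $H^1(k(P), G_k)$, which is trivial by Lang's theorem (since $k(P)$ is finite and $G_k$ is connected). Hence $\xi_{k(Y)}$ lies in the kernel of $H^1(k(Y), G_k) \to \prod_P H^1(k(Y)_P, G_k)$. Over the global function field $k(Y)$, this Hasse kernel vanishes: the image in $H^1(k(Y), T_0)$, where $T_0 = G_k/G_k^{\mathrm{der}}$ is the quotient torus, is zero because every $k$-torus is split by a cyclic extension and so, by Endo--Miyata, is a direct factor of a quasi-trivial torus; the remaining obstruction in $H^1(k(Y), G_k^{\mathrm{sc}})$ vanishes by Harder's Hasse principle for semisimple simply connected groups over global function fields.

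With $\xi_{k(Y)} = 1$ in hand, Theorem \ref{mainprop}(ii) gives $\xi_P = 1$ at every codimension-one or codimension-two point $P$ of $\X$, and Theorem \ref{mainprop}(iii) applied with the $H$ of (b) concludes $\xi_F = 1$ in $H^1(F, G)$. The main obstacle is the Hasse-principle step for $\xi_{k(Y)}$, which combines Endo--Miyata for the toral quotient with Harder's theorem for the simply connected part.
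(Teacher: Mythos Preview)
Your overall architecture matches the paper's exactly: extend via Theorem~\ref{mainprop}(i), verify the hypothesis of (ii), then conclude by (iii) together with Lemma~\ref{Agroups}. The observation that $\xi_{k(Y)}$ is everywhere locally trivial, via Lang's theorem applied to the connected group $G_k$ over the finite residue fields $k(P)$, is correct. The trouble is in your Hasse-principle step.

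First, your invocation of Endo--Miyata is wrong. That theorem says a \emph{flasque} torus split by a cyclic extension is a direct factor of a quasi-trivial torus; it does not apply to an arbitrary $k$-torus. For a quadratic extension $k'/k$ the norm-one torus $T_0=R^1_{k'/k}\bbG_m$ has character lattice $\bbZ$ with the sign action, and $H^1(\Gal(k'/k),\hat T_0)=\bbZ/2\neq 0$, so $\hat T_0$ is not a summand of a permutation lattice. Since $G_k$ could itself be such a torus, the coradical $T_0=G_k/G_k^{\mathrm{der}}$ need not be invertible, and your claim that the image of $\xi_{k(Y)}$ in $H^1(k(Y),T_0)$ vanishes is unsupported.

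Second, even if that image were trivial, the class would lift to $H^1(k(Y),G_k^{\mathrm{der}})$, not to $H^1(k(Y),G_k^{\mathrm{sc}})$. Harder's theorem gives $H^1(k(Y),G_k^{\mathrm{sc}})=1$, but $G_k^{\mathrm{der}}$ is in general only semisimple, and you still owe an argument controlling the boundary in $H^2(k(Y),\mu)$ for $\mu=\ker(G_k^{\mathrm{sc}}\to G_k^{\mathrm{der}})$.

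The paper sidesteps both problems simultaneously by using the flasque resolution
\[
1 \to Q \to G_k^{\mathrm{sc}}\times P \to G_k \to 1
\]
of \cite[Prop.~3.1]{CTCrelle08}, with $P$ quasi-trivial and $Q$ flasque. Now Endo--Miyata does apply to $Q$, and together with $\Br(Y)=0$ gives $H^2(Y,Q)=0$; on the other side $H^1(k(Y),G_k^{\mathrm{sc}}\times P)=1$ by Harder and Hilbert~90. A direct diagram chase then shows that the image of $H^1(Y,G_k)$ in $H^1(k(Y),G_k)$ is already trivial, so neither Lang's theorem nor any Hasse principle is needed. Your detour can in fact be repaired, but the repair requires exactly this flasque resolution, at which point the paper's shorter route is available.
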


\begin{proof}

 By Theorem \ref{mainprop} (i), there exists $\xi \in H^1(\X,G)$ which restricts to the given class in $H^1(F,G)$.
By hypothesis, each component $Y$ of the special fibre is a regular, hence smooth, projective curve over 
the finite field $k$.
Let us show that the hypothesis of Theorem \ref{mainprop} (ii) is fulfilled.
It is enough to show that for
a smooth, projective, connected curve $Y/k$  
and $G_{k}$ a connected reductive group the image of $H^1(Y,G_{k})$ in $H^1(k(Y),G_{k})$
is trivial.
There exists a central extension
 of  algebraic $k$-groups
$$1 \to Q \to G_{k}^{sc} \times P  \to G_{k} \to 1,$$
where $G_{k}^{sc}$ is a  simply connected semisimple $k$-group, $P$
is a quasitrivial $k$-torus  and
$Q$ is a flasque $k$-torus (\cite[Prop. 3.1]{CTCrelle08}).
 As recalled in the proof of Lemma \ref{Agroups}, because $k$ is finite there exists
a  $k$-torus $Q_{1}$ such that $Q \times_{k} Q_{1}$ is a quasitrivial $k$-torus.
The Brauer group $H^2(Y,{\mathbb G}_{m})$ of a smooth projective curve $Y$ over a finite field is zero.
Since this holds over any finite extension of $k$, this implies
  $H^2(Y,T)=0$ for any   quasitrivial $k$-torus,  
  hence
for any $k$-torus $T$ which is a direct factor of a quasitrivial $k$-torus. Thus $H^2(Y,Q)=0$.

In   the  commutative diagram of exact sequences of pointed \'etale cohomology sets
$$
\begin{CD}
&&H^1(Y, G_{k}^{sc} \times P )   @>>>  H^1(Y,G_{k}) @>>> H^2(Y,Q)  \\
&& @V{}VV       @V{ }VV      @V{}VV 	\\
 && H^1(k(Y), G_{k}^{sc}  \times P) @>>>  H^1(k(Y),G_{k}) @>>>    H^2(k(Y),Q).
  \end{CD}$$
we have $ H^1(k(Y), G_{k}^{sc} )=1$ (Harder \cite{H1,H2}) and $H^1(k(Y),P)=0$
 (Hilbert's theorem 90), and we have proved $H^2(Y,Q)=0$. Thus
 the image of  $H^1(Y,G_{k})$ in $H^1(k(Y),G_{k})$
is trivial.

The claimed result now follows from
  Lemma \ref{Agroups} and Theorem \ref{mainprop} (iii).
 \end{proof}

 \begin{remark}  
Applying Theorem \ref{maincor} to the projective linear groups ${\rm PGL}_{n}$,
 one recovers a proof of Grothendieck's theorem that the 
 Brauer group
 of  a regular proper model $\X/A $ is trivial.  That
 in its turn is closely related
 to the statement that an  element of the Brauer group of $X$ which vanishes at
 each closed point of $X$ is trivial (Lichtenbaum \cite{Licht2}). 
 \end{remark}

 \begin{thm}\label{mainthm}  
 Let $A$ be the ring of integers of a $p$-adic field $K$,
 let $k$ be its
residue field.
Let $X/K$  be a
smooth, projective, geometrically integral curve.
Let $F$ be the function field of $X$.
Let $\Omega$ be the set of all discrete valuations on $F$.
Let $G/A$ be a  (fibrewise connected) reductive group. 
The restriction map with respect to completions
$ H^1(F,G) \to \prod_{v \in \Omega} H^1(F_{v},G)$
has a trivial kernel.
 \end{thm}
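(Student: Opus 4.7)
The plan is to reduce Theorem \ref{mainthm} to Theorem \ref{generalizedGroth}(i) combined with Lemma \ref{Agroups}. The only genuine piece of work is to produce a projective, flat, regular, connected two-dimensional model of $X$ over $\Spec A$ to which Theorem \ref{generalizedGroth} can be applied.

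First, I would construct a regular proper model. Since $X/K$ is a smooth projective geometrically integral curve and $A$ is a complete (hence excellent) discrete valuation ring, a standard application of resolution of singularities for two-dimensional excellent schemes (Lipman, Abhyankar) yields a projective, flat, connected, regular $A$-scheme $\X$ whose generic fibre is $X/K$. In particular, the function field of $\X$ is $F = K(X)$.

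Next, I would produce the auxiliary group. By Lemma \ref{Agroups}, there exists a connected linear algebraic $K$-group $H_0$ such that $(G\times_A K)\times_K H_0$ is $K$-rational. Setting $H := H_0\times_K F$, the $F$-variety
\[
(G\times_A F)\times_F H \;\simeq\; \bigl((G\times_A K)\times_K H_0\bigr)\times_K F
\]
is obtained by base change from a $K$-rational variety, so it is $F$-rational (rationality is preserved under arbitrary extension of the base field). The group $H$ is connected as a $K$-connected linear algebraic group remains connected after field extension.

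Finally, I would invoke Theorem \ref{generalizedGroth}(i) applied to the regular model $\X/A$ constructed above and to the group $G/A$ (together with the auxiliary $H/F$). Its conclusion is exactly that the kernel of the diagonal restriction map
\[
H^1(F,G)\longrightarrow \prod_{v\in\Omega} H^1(F_v,G)
\]
is trivial, which is the statement of Theorem \ref{mainthm}. I do not foresee any substantive obstacle: the genuine content was already packaged into Theorem \ref{generalizedGroth} (the patching argument using \cite{HHK}) and into Lemma \ref{Agroups} (the flasque resolution argument, exploiting that the residue field $k$ is finite so that any flasque $k$-torus is, up to a quasitrivial factor, stably $k$-rational). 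The only subtlety to check is that a regular model really exists in the stated generality, but this is classical for curves over complete discrete valuation rings.
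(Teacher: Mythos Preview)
Your proposal is correct. The paper's own proof also reduces to earlier results in the paper, but it routes through Theorem~\ref{maincor} together with Lemma~\ref{cartesiandensity}: one first chooses a regular model $\X/A$ whose special fibre has regular reduced components, then uses Lemma~\ref{cartesiandensity} to deduce that a class in the kernel is unramified at all codimension~1 points of $\X$, and finally applies Theorem~\ref{maincor}. Your route via Theorem~\ref{generalizedGroth}(i) and Lemma~\ref{Agroups} is slightly more direct: since Theorem~\ref{generalizedGroth}(i) already speaks of classes trivial in all completions (rather than merely unramified ones), you do not need to invoke Lemma~\ref{cartesiandensity} explicitly, nor do you pass through the extra input used in the proof of Theorem~\ref{maincor} (Harder's vanishing $H^1(k(Y),G^{sc})=1$ and the flasque-resolution argument for $G_k$). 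Both proofs ultimately rest on Theorem~\ref{mainprop} and on Lemma~\ref{Agroups}, so the substantive content is the same; your packaging simply bypasses the intermediate strengthening that Theorem~\ref{maincor} provides.
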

\begin{proof}
 One knows
 that  $X/K$ admits a model $\X/A$ as in Theorem \ref{maincor}.
 Let $\xi \in H^1(F,G)$ be in the kernel of the above restriction map.
By   Lemma \ref{cartesiandensity}, the class $\xi$ is unramified at points of
codimension 1 on $\X$.  We conclude by an application of Theorem \ref{maincor}. 
\end{proof} 
 
\begin{remark}

For any integer $n$ and the $A$-group $G={\rm PGL}_{n}$, in the above theorem one may replace $\Omega$ by the set $\Omega_{F/K}$
of discrete valuations on $F$ which are trivial on $K$ : this is just a reinterpretation of
Lichtenbaum's theorem \cite{Licht2}. That this is not so for arbitrary $G$ is shown by the following example.

Let $p$ be an odd prime and $K=\bbQ_{p}$. Let $u$ be a unit in $\bbQ_{p}$ which is not a square.
 Let $X/K$ be the elliptic curve $y^2=x(x+1)(x-p)$. Let $F=K(X)$.  For $a \in F^*$, let
 $(a) \in F^*/F^{*2}=H^1(F,\bbZ /2)$.  Since the divisor of $x \in F^*$ on $X$
  is divisible by 2, the cup-product 
 $\alpha=(x)\cup (u) \cup (p) \in H^3(F,\bbZ /2)$ is unramified at places $v$ of $F$ trivial on $K$, hence
 is trivial in the completion $F_v$ at such a place. The prime $p$ defines a place on $F$,
 the residue field is the function field $\bbF_{p}(Y)$, where $Y$ is the curve defined by
 $y^2=x^2(x+1)$ over $\bbF_{p}$, which is birational to the curve $z^2=x+1$.The residue of $\alpha$ at that place is 
 $(z^2-1) \cup (u) \in H^2(\bbF_{p}(Y),\bbZ/2)$, and  this class is nonzero, since it has a nontrivial residue at $z=1$.
 
This implies : for $G$ the split group of type ${\rm G}_{2}$, 
the restriction map $$ H^1(F,G) \to \prod_{v \in \Omega_{F/K}} H^1(F_{v},G)$$ has a nontrivial kernel.

Lichtenbaum's theorem also  implies that for any    central simple algebra over $D$ over $K$,
and $G$ the $F$-group ${\rm PGL}_{D}$, the above map has a trivial kernel. The above example
shows that this is not so for  the $K$-group  $G={\rm SL}_{D}$,
 where $D$ is the quaternion algebra $(u,p)$ over $K=\bbQ_{p}$.
 \end{remark}

 \begin{remark}  
 Let $A$ be the ring of integers of a $p$-adic field $K$. Let $G/A$ be a (connected) reductive group.
Let $F=K(X)$ be the function field
 of a smooth geometrically integral curve over $K$.  Let $\X$ be a regular   model of
 $X$ over $A$.   Assume that  the fibres of $G\to \Spec A$ are  {\it simply connected}
 (this is equivalent to the assumption that the $K$-group  $G\times_{A}K$ is simply connected).
Then for $\xi \in H^1(F,G)$ and $x$ a point  of codimension~1 on $\X$, defining a valuation $v$ on
$F$
with associated completion $F_{v}$,
the conditions

(i) $\xi$ is unramified at $x$ (as in Theorem \ref{maincor})

(ii) $\xi$ has trivial image in $H^1(F_{v},G)$ (as in Theorem \ref{generalizedGroth}~(i)  and Theorem  \ref{mainthm})

\noindent are equivalent.

 Indeed, for any point $x$ of codimension~1 on a regular model $\X$, with
 complete local ring $\hat{A_{x}}$ and residue class field $\kappa(x)$,
 we have $H^1(\hat{A_{x}},G) \simeq H^1(\kappa(x),G)$ (Hensel's lemma)
 and $H^1(\kappa(x),G)=1$ whether $x$ lies on the generic fibre of $\X/A$  (Kneser, Bruhat-Tits)  or $x$ is a generic point of a component of the special fibre 
 of $\X/A$ (Harder \cite{H1,H2}).
  \end{remark}

 \section{Connection to Rost's invariant and a theorem of Kat\^o} \label{RostKato}

For any simply connected, absolutely almost simple semisimple  group
$G$ over
a field $F$ of characteristic zero,
we have Rost's invariant (see \cite[Chapter VII, Section 31]{KMRT}) :
$$R_G:  H^1(F,G) \to H^3(F,\bbQ/\bbZ(2)).$$

In a number of cases, this map has a trivial kernel.
Such is the case if $G=\SL(D)$ for $D/F$ a central simple algebra
of squarefree index (Merkurjev--Suslin).
Such is the case  for quasisplit groups of type
${}\sp {3,6}D\sb 4$ (\cite[40.16]{KMRT}, \cite[Thm. 6.14]{Ch})
or of type $E\sb 6,E\sb 7$ (Garibaldi  \cite[Theorem 0.1]{Garibaldi}, see also \cite[Thm. 6.1]{Ch}).
Such is the case for the split group $G_{2}$ (\cite[Thm.~9]{S}).
Such is the case for the split group $F_{4}$ (\cite[\S 9.4]{S}) 
It is not reasonable to hope for a positive answer for an arbitrary such $G$,
as examples with $G=\Spin(q)$ show.

\medskip

For fields of cohomological dimension at most 2, the triviality of the
kernel of the Rost invariant $R_{G}$  is none other than
Serre's conjecture II for $G$,  which in this generality is still unknown
for $G$   of type $E_{8}$.

\begin{remark}
For fields of cohomological dimension 3 and $G$ arbitrary,
$R_{G}$ may have a nontrivial kernel, as shown by the following
example due to Merkurjev, and which we publish with his kind permission.
There exists a field $k$   of characteristic 0 and 
of cohomological dimension 2  over which there exist
a central simple division algebra $A=H_{1}\otimes_{k} H_{2}$
with $H_{1}$ and $H_{2}$ quaternion algebras (\cite[Teorem 4]{Merk0}).
Let $F$ be either $k(t)$ or $k((t))$.
Then $F$ has cohomological dimension3. 
The reduced norm  of $A$ is a homogeneous form of degree 4
without a zero over $k$. Thus
$t^2 \in F$  is not a reduced norm of $A\otimes_{k}F$.
That is, the class of $t^2$ in $F^*/{\rm Nrd}(A_{F}^*)=H^1(F,G)$,
with $G=SL_{1}(A)$,
is nontrivial. Let $[A] \in H^2(k,\mu_{4}) \subset \Br k$ be the class of $A$.
By \cite[p. 437]{KMRT} (for more details, see \cite[p. 138]{Merk}),
 the Rost invariant $R_{G}$ sends $t^2 \in H^1(F,G_{F})$
to the cup-product $t^2 \cup [A] \in H^3(F,\mu_{4}^{\otimes 2}) \subset
H^3(F,\bbQ/\bbZ(2))$ (here $t^2$ is taken in $F^*/F^{*4}=H^1(F,\mu_{4})$).
Since $2[A]=0 \in \Br  k$, this cup-product is zero.
\end{remark}

When $G$ is quasisplit, not of type $E_8$,  the situation is much better.
 The following proposition
is certainly known to specialists.

\begin{prop}\label{rostspin}
Let $F$ be a field of characteristic not $2$ and of $2$-cohomological
dimension at most $3$. Let $q_0$ be a quadratic form over $F$ which
is isotropic and of dimension at least $5$.
Let $G = \Spin(q_0)$. Then the kernel of the Rost map
$H^1(F,G) \to H^3(F,\bbQ/\bbZ(2))$ is trivial.
\end{prop}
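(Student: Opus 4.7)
The plan is to translate the Rost invariant of a $\Spin(q_0)$-torsor into the Arason invariant of a Witt difference of quadratic forms, and then exploit the bound on the $2$-cohomological dimension through the Milnor conjecture. First, given $\xi \in H^1(F, \Spin(q_0))$ with trivial Rost invariant, I would project $\xi$ to $\bar{\xi} \in H^1(F, \SO(q_0))$, which by twisting corresponds to a nondegenerate quadratic form $q$ over $F$ of the same dimension and discriminant as $q_0$. The fact that $\bar{\xi}$ lifts to $\Spin(q_0)$ forces the Clifford invariants of $q$ and $q_0$ to agree, i.e. $q \perp (-q_0) \in I^3(F)$. A standard computation (cf.\ \cite{KMRT}, Chapter VII) identifies $R_G(\xi)$ with the Arason invariant $e_3(q \perp (-q_0)) \in H^3(F, \bbZ/2) \subset H^3(F, \bbQ/\bbZ(2))$, so the hypothesis combined with injectivity of $e_3$ on $I^3/I^4$ upgrades $R_G(\xi) = 0$ to the statement $q \perp (-q_0) \in I^4(F)$.

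Next I would argue that this places $q \perp (-q_0)$ in $I^n(F)$ for every $n$, hence in $0$. By Voevodsky's proof of the Milnor conjecture, $e_n : I^n(F)/I^{n+1}(F) \xrightarrow{\sim} H^n(F, \bbZ/2)$ is an isomorphism for every $n$. Since $\operatorname{cd}_2(F) \leq 3$ makes the target vanish for $n \geq 4$, one gets $I^n(F) = I^{n+1}(F)$ for all such $n$. The class $\phi := q \perp (-q_0)$ is represented by a form of some fixed dimension $d$; choosing $n$ with $2^n > d$ puts an anisotropic representative of $\phi$ in $I^n(F)$ of dimension less than $2^n$, and the Arason-Pfister Hauptsatz forces $\phi = 0$ in $W(F)$. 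Combined with $\dim q = \dim q_0$ this yields $q \cong q_0$, hence $\bar{\xi} = 1$ in $H^1(F, \SO(q_0))$.

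The last step is to upgrade $\bar{\xi} = 1$ to $\xi = 1$, and this is where the isotropy hypothesis on $q_0$ enters critically. The central extension $1 \to \mu_2 \to \Spin(q_0) \to \SO(q_0) \to 1$ yields the exact sequence of pointed sets
\[
\SO(q_0)(F) \xrightarrow{\operatorname{Sn}} F^{*}/F^{*2} \to H^1(F, \Spin(q_0)) \to H^1(F, \SO(q_0)),
\]
where $\operatorname{Sn}$ is the spinor norm. Since $q_0$ is isotropic one may write $q_0 \cong \langle 1, -1 \rangle \perp q_0'$; the resulting embedding $\bbG_m = \SO(\langle 1, -1 \rangle) \hookrightarrow \SO(q_0)$ makes the tautological map $F^{*} \to F^{*}/F^{*2}$ a restriction of $\operatorname{Sn}$, so $\operatorname{Sn}$ is surjective. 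The connecting map $F^{*}/F^{*2} \to H^1(F, \Spin(q_0))$ is therefore zero, the fiber of $H^1(F, \Spin(q_0)) \to H^1(F, \SO(q_0))$ over the trivial class is reduced to $\{1\}$, and $\xi = 1$. The main point of delicacy, I expect, is the first step: the identification of $R_G(\xi)$ with $e_3(q \perp (-q_0))$ is classical, but the normalization must be checked carefully in order to pass correctly between the vanishing of $R_G(\xi)$ and the vanishing of the Arason invariant.
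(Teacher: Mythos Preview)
Your argument is correct and follows essentially the same route as the paper: push $\xi$ down to $H^1(F,\SO(q_0))$, identify the Rost invariant with the Arason invariant of the Witt difference, use the $2$-cohomological dimension bound to force that difference to vanish, and then kill the residual $\mu_2$-ambiguity via surjectivity of the spinor norm for an isotropic form. The only noteworthy difference is in how you obtain $I^4(F)=0$: the paper invokes the pre-Voevodsky results of Merkurjev--Suslin, Rost, and Arason--Elman--Jacob (which already give $e_3:I^3(F)\to H^3(F,\mu_2)$ an isomorphism and $I^4(F)=0$ when $\mathrm{cd}_2(F)\le 3$), whereas you appeal to the full Milnor conjecture together with the Arason--Pfister Hauptsatz. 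Your route is heavier machinery but perfectly valid; the paper's references show that the special case needed here was available earlier.
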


\begin{proof}

Let
$$1 \to \mu_{2 }  \to \Spin(q_{0})  \to \SO(q_{0}) \to 1$$
be the central isogeny from the Spin group to the special orthogonal group.
This gives rise to an exact sequence of pointed Galois cohomology sets
$$ \SO(q_{0})(F)\stackrel{\delta _{0} }{\to} H^1(F,\mu_{2})
\stackrel{i}{\to} H^1(F,\Spin(q_{0}) ) \stackrel{j}{\to}H^1(F,
\SO(q_{0})).$$
For $\xi \in H^1(F,\Spin(q_{0}) )$, the class $j(\xi)$ corresponds to  
a quadratic form $q_{1}$ with dimension $dim(q_{0})=dim(q_{1})$,
discriminant $disc(q_{0})=disc(q_{1})$ and
Clifford invariant $c(q_{0})=c(q_{1})$. Then in the Witt group
$W(F)$  the class $q_{1} \perp -q_{0}$
lies in the third power $I^3(F)$ of the fundamental ideal
and  its Arason invariant $e_{3}(q_{1} \perp -q_{0}) \in
H^3(F,\mu_{2})$, which coincides
with the Rost invariant of $\xi$ (\cite[Page 437]{KMRT}),
is zero. Now the hypothesis $cd_2(F) \leq 3$ implies that
$ H^4(F,\mu_{2})=0$, $I^4(F)=0$
and that $e_{3} : I^3(F) \to H^3(F,\mu_{2})$
is an isomorphism (\cite{MS}, \cite{Rost}, \cite[Cor. 4, Thm. 2]{AEJ}).
The two forms $q_{0}$ and $q_{1}$ have the same dimension.
By Witt simplification  they are isomorphic. Thus $j(\xi)=1$
hence $\xi=i(\eta)$ for some $\eta \in H^1(F,\mu_{2})$. Since $q_{0}$
is isotropic,
the connecting map $\delta _{0} : \SO(q_{0}) \to
H^1(F,\mu_{2})=F^*/F^{*2}$, which is
the spinor map, is onto.
Thus $\xi=1 \in H^1(F,\Spin(q_{0}) )$.

\end{proof}

\begin{thm}\label{rostinjectifcd3}
Let $F$ be a field of characteristic zero and of cohomological
dimension at most 3. Let $G/F$ be an absolutely almost simple, simply
connected, quasisplit  semisimple group. Assume that $G$
is not of type $E_{8}$.
Then  the kernel of the Rost map $H^1(F,G) \to H^3(F,\bbQ/\bbZ(2))$
is trivial.
\end{thm}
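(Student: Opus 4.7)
The plan is a case-by-case analysis on the Killing--Cartan type of $G$, combining Proposition \ref{rostspin} with the injectivity results cited in the paragraphs preceding it, plus a trace-form reduction (via Jacobson's theorem) for the outer type $A$ case.

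For the split types $^1A_n$ (where $G=\SL_{n+1}$) and $C_n$ (where $G=\Sympl_{2n}$), Hilbert~90 gives $H^1(F,G)=1$ and the statement is vacuous. For types $B_n$ with $n\geq 2$ and non-triality $D_n$ with $n\geq 3$, one has $G=\Spin(q_0)$ with $q_0$ the quasisplit quadratic form of dimension $2n+1$ or $2n$; such a $q_0$ is isotropic of dimension at least $5$, so Proposition \ref{rostspin} applies directly. The low-rank coincidence $B_1\cong A_1$ collapses to $\SL_2$. The triality types $^3D_4$, $^6D_4$ and the exceptional types $^1E_6$, $^2E_6$, $E_7$, $F_4$, $G_2$ are handled by the cited results of Chernousov, Garibaldi, Serre and \cite{KMRT}, all valid over an arbitrary field.

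It remains to treat outer type $^2A_n$ with $n\geq 2$: $G=\SU(h)$ where $h$ is the quasisplit Hermitian form of rank $n+1$ over a quadratic \'etale extension $K/F$. The trace form $q_h$ of $h$ is a quasisplit (hence isotropic) quadratic form over $F$ of rank $2(n+1)\geq 6$, and the natural embedding $\SU(h)\hookrightarrow\Spin(q_h)$ is compatible with Rost invariants up to the Dynkin index of the embedding; in particular, any $\xi\in\ker R_{\SU(h)}$ has trivial image under $R_{\Spin(q_h)}$. Proposition \ref{rostspin} then gives triviality of the image of $\xi$ in $H^1(F,\Spin(q_h))$, after which Jacobson's theorem---two Hermitian forms over a separable quadratic extension in characteristic $\neq 2$ are isometric iff their trace forms are---yields $\xi=1\in H^1(F,\SU(h))$. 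The main technical obstacle is verifying this Dynkin-index-compatible factorization of Rost invariants and the precise identification of the induced map on $H^1$ in terms of isomorphism classes of forms; these details can be extracted from \cite[Chap.~VII]{KMRT}.
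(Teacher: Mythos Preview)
Your proof is correct and follows essentially the same route as the paper's: the same case split, the same trivial/exceptional cases with the same references, and the same reduction for $^2A_n$ via the embedding $\SU(h)\hookrightarrow\Spin(q_h)$ combined with Proposition~\ref{rostspin}. The only difference in presentation is that the paper cites \cite[31.44]{KMRT} to say outright that the composite $H^1(F,\SU(h))\to H^1(F,\Spin(q_h))\to H^3$ \emph{equals} the Rost invariant of $\SU(h)$ (Rost multiplier $1$, not merely ``up to the Dynkin index''), and then concludes in one line; you instead make explicit the missing injectivity step via Jacobson's theorem, which the paper leaves tacit. Both arguments are the same in substance.
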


\begin{proof}
The cases $^1A\sb n$ and $C\sb n$ are trivial, since in these cases
$H^1(F,G)=1$
over any field $F$. For quasisplit groups of type
${}^{3,4}D\sb 4,E\sb 6,E\sb 7$, $G_{2}$ and $F_{4}$ the kernel
is trivial over any field $F$ of characteristic zero (see references above).

Let $G$ be of type ${}^{2}A\sb n$, quasisplit, $n \geq 2$. There is
a quadratic extension $L/F$ and an $L/F$--hermitian form $h$ of dimension
$n+1$ such that $G = \SU(h)$. Further $G$ quasisplit implies that  $G$
is isotropic (\cite[20.6 (ii), p.~225]{Borel}), which in turn implies that the hermitian form $h$ is
isotropic (\cite[23.8, p.~264]{Borel}).
 Let $V$ be the underlying space of $h$. Then the map
$q : V \to F$ given by $q(v)=h(v,v)$ is a quadratic form of dimension $2n+2$
over $F$ which is isotropic. Further there is a homomorphism
$\alpha: \SU(h) \to \Spin(q)$ such that the composite map
$$ H^1(F, \SU(h) \to H^1(F, \Spin(q) \to H^3(F,\bbQ/\bbZ(2)),$$
is the Rost invariant for $\SU(h)$ where the first map is
induced by $\alpha$ and the second one is the Rost invariant
for $\Spin(q)$ (\cite[31.44, p.~438]{KMRT}). The  triviality of the kernel of the
Rost invariant in this case follows from
  Proposition  \ref{rostspin}.

Let $G$ be of type $B_n, n\geq 2$, or $^1D_n$  or $^2D_n$, $n\geq 3$, which is
quasisplit. Then $G$ is isomorphic to $\Spin(q)$ for some quadratic
form $q$ over $F$ of dimension at least 5; further, $G$ quasisplit
implies that  $G$
is isotropic, which in turn implies
 that the quadratic form $q$ is isotropic (\cite[ 23.4, p.~256]{Borel}).
In this case the triviality of the kernel of the Rost invariant follows from
Proposition  \ref{rostspin}.

This completes the proof of the  triviality of the kernel  
of the Rost invariant for  all quasisplit groups not of type $E_8$.
\end{proof}

 \bigskip

By combining Theorem \ref{rostinjectifcd3} and a theorem of Kat\^o,
one gets a proof of Conjecture 2 of the introduction for quasisplit groups without $E_{8}$-factors.
That proof is independent of the other sections of the present paper.

 \begin{thm}\label{rostkato}
 Let   $K$ be a $p$-adic field.
Let $X/K$ be a smooth, projective,
geometrically integral curve.  Let $F=K(X)$  be the function field of $X$.
Let $\Omega$ denote the set of discrete valuations 
on the field $F$. Given $v \in \Omega$ we let $F_{v}$ denote the completion of
$F$ at $v$.
Let $G/F$ be a quasisplit,   simply connected group, absolutely almost simple group 
without $E_{8}$ factor.
Then the
  kernel of the     diagonal map $H^1(F,G) \to \prod_{v \in \Omega}
 H^1(F_{v},G)$
 is trivial.
 \end{thm}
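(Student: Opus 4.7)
The plan is to combine the triviality of the Rost kernel for quasisplit groups without an $E_{8}$ factor (Theorem \ref{rostinjectifcd3}) with a Hasse principle of Kat\^o \cite{Kato} for the third Galois cohomology group with coefficients in $\bbQ/\bbZ(2)$. Let $\xi \in H^1(F,G)$ be a class whose image in $H^1(F_v,G)$ is trivial for every $v \in \Omega$. Since $K$ is $p$-adic, hence of cohomological dimension $2$, and since $F=K(X)$ has transcendence degree one over $K$, the field $F$ has cohomological dimension at most $3$. Because $G$ is quasisplit, simply connected, absolutely almost simple, and not of type $E_{8}$, Theorem \ref{rostinjectifcd3} shows that the Rost map $R_{G} \colon H^1(F,G) \to H^3(F, \bbQ/\bbZ(2))$ has trivial kernel; it therefore suffices to prove that $R_{G}(\xi)=0$.

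Next I would exploit the functoriality of the Rost invariant with respect to field extensions $F \hookrightarrow F_{v}$. Since $R_{G}$ commutes with base change and $\xi$ becomes trivial over each $F_{v}$, the class $\alpha := R_{G}(\xi) \in H^3(F,\bbQ/\bbZ(2))$ restricts to zero in every $H^3(F_{v}, \bbQ/\bbZ(2))$. At this point I would invoke Kat\^o's theorem \cite{Kato}: for the function field $F$ of a curve over a $p$-adic field, the diagonal restriction map
$$H^3(F,\bbQ/\bbZ(2)) \longrightarrow \prod_{v \in \Omega} H^3(F_{v}, \bbQ/\bbZ(2))$$
has trivial kernel. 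Applied to $\alpha$, this yields $R_{G}(\xi)=0$, and the injectivity of $R_{G}$ furnished by Theorem \ref{rostinjectifcd3} then gives $\xi = 1$, as desired. The argument is entirely independent of the patching techniques used elsewhere in the paper.

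The main delicate point I anticipate is verifying that Kat\^o's Hasse principle is available in precisely the form needed: one must ensure that the product runs over all rank one discrete valuations of $F$ (not only those coming from a single regular proper model), and that the coefficients $\bbQ/\bbZ(2)$ include the $p$-primary part, which requires the relevant extension of Kat\^o's original statement (or a separate treatment of the characteristic $p$ piece). Once these points are secured, no further input is needed beyond the case-by-case list of quasisplit types handled in Theorem \ref{rostinjectifcd3}; the exclusion of $E_{8}$ in the conclusion reflects exactly the current lack of an injectivity result for the Rost invariant in that type.
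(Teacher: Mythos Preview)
Your proposal is correct and matches the paper's own proof essentially verbatim: observe that $F=K(X)$ has cohomological dimension $3$, apply Theorem~\ref{rostinjectifcd3} to reduce to showing $R_G(\xi)=0$, use functoriality of the Rost invariant to see that $R_G(\xi)$ dies in every $H^3(F_v,\bbQ/\bbZ(2))$, and then invoke Kat\^o's Hasse principle for $H^3(F,\bbQ/\bbZ(2))$. The paper adds only the parenthetical remark that it suffices to use the valuations coming from codimension~$1$ points of a regular proper model, which addresses the concern you raised in your final paragraph.
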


 \begin{proof}
 The field $F=K(X)$ is of cohomological dimension 3.
 The result immediately follows from  the combination of   Theorems \ref{rostinjectifcd3}  
 and  a theorem of  Kat\^o \cite{Kato}: 
 For $X/K$ as in the statement of the theorem, the kernel of the diagonal restriction map
 $$H^3(F,\bbQ/\bbZ(2)) \to \prod_{v \in \Omega} H^3(F_{v},\bbQ/\bbZ(2)) $$
 is trivial (here it is enough to consider the $v$'s associated to the codimension 1 
 points on a regular proper model of $X$ over the ring of integers of $K$).
 \end{proof}
 
 The hypotheses of the above theorem should be compared with  those of Theorem  \ref{mainthm},
 whose proof builds upon the work of Harbater, Hartmann and Krashen.

  Using  Theorem \ref{mainthm}  together with Bruhat-Tits theory,
  we now show that Theorem \ref{rostinjectifcd3}
  also holds for    groups of type $E_{8}$ over $F(X)$.

\begin{thm}\label{rostinjectifK(X)}
Let $A$ be the ring of integers of a $p$-adic field $K$.
Let $X/K$ be a smooth, projective,
geometrically integral curve.  Let $F=K(X)$  be the function field of $X$.
 Let $G$ be an absolutely  almost simple,  
  simply connected  semisimple group over $A$.
If $G$ is of type $E_{8}$, assume that the residue characteristic
is different from 2, 3 and 5.
Then the kernel of the Rost map $ H^1(F,G) \to H^3(F,\bbQ/\bbZ(2))$
is trivial.
\end{thm}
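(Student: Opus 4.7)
The plan is to apply Theorem \ref{mainthm} to reduce to local problems at the completions $F_v$, and then to combine Bruhat-Tits theory over each $F_v$ with Serre's conjecture II over the residue field $\kappa(v)$. A preliminary observation concerns the quasi-split nature of $G$: because the residue field $k$ of $A$ is finite, Steinberg's theorem makes $G_k$ quasi-split, and Hensel's lemma applied to the smooth projective $A$-scheme of Borel subgroups of $G$ produces an $A$-Borel of $G$. Hence $G/A$ is quasi-split, and so $G_F$ and every $G_{F_v}$ are quasi-split simply connected absolutely almost simple of the same type as $G$. In the case of type $E_8$, which has no outer automorphism and satisfies $H^1(k,E_8)=1$ by Lang, the group $G_k$ is actually split, and hence so are $G/A$ and each $G_{F_v}$.

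Let $\xi \in H^1(F,G)$ be such that $R_G(\xi)=0$; we aim to show $\xi=1$. By Theorem \ref{mainthm}, it suffices to prove $\xi|_{F_v}=1$ in $H^1(F_v,G)$ for every $v\in\Omega$ corresponding to a codimension 1 point of a suitable regular proper model $\X/A$ of $X/K$. For such $v$, the residue field $\kappa(v)$ is either a finite extension of $K$ (a $p$-adic field) or a function field in one variable over $k$; in either case its cohomological dimension is at most $2$, so that of $F_v$ is at most $3$. Moreover, because $G$ is defined over $A$ and $A\hookrightarrow \mathcal{O}_{F_v}$, the group $G_{F_v}$ extends to a reductive group scheme over $\mathcal{O}_{F_v}$ with residue group the quasi-split simply connected $\kappa(v)$-group $G_{\kappa(v)}$.

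Fix such a $v$. Over $F_v$, Bruhat-Tits theory for simply connected semisimple groups over a Henselian discretely valued field with perfect residue field and good reduction describes $H^1(F_v,G_{F_v})$ via torsors under the reductive quotients of parahoric subgroups. The vanishing of the Rost invariant $R_G(\xi|_{F_v})$ is what must force $\xi|_{F_v}$ to descend, via the hyperspecial parahoric, to a class in $H^1(\kappa(v),G_{\kappa(v)})$ with trivial Rost invariant, compatibly with the Rost invariant. Since $\kappa(v)$ has cohomological dimension at most $2$, Serre's conjecture II for simply connected semisimple groups applies: for types other than $E_8$ this is known in our setting by work of Merkurjev-Suslin, Bayer-Parimala, Gille, Chernousov and Garibaldi; for type $E_8$ it is Kneser-Bruhat-Tits-Platonov over $p$-adic residue fields and, under the hypothesis that the residue characteristic is different from $2,3,5$, holds for function-field residue fields thanks to work of Chernousov and others. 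Hence $H^1(\kappa(v),G_{\kappa(v)})=1$, and the descent step yields $\xi|_{F_v}=1$.

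The principal technical obstacle is the Bruhat-Tits descent step: one must show that triviality of the Rost invariant of $\xi|_{F_v}$ forces $\xi|_{F_v}$ to be unramified, so that it lifts to $H^1(\mathcal{O}_{F_v},G)$ which by Hensel and Nisnevich is identified with $H^1(\kappa(v),G_{\kappa(v)})$. Once this detection is in place, Serre II over the residue field closes each local computation, and Theorem \ref{mainthm} assembles the local vanishings into the desired global statement. The type-independent nature of this route is exactly what allows the argument to cover type $E_8$, subject to the stated restriction on residue characteristic.
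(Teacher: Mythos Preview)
Your overall architecture---reduce via Theorem~\ref{mainthm} to the completions $F_v$, then invoke Bruhat--Tits theory and pass to the residue field---matches the paper's. But the key local step is handled quite differently, and your version has a genuine gap.

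You propose that vanishing of the Rost invariant over $F_v$ should force $\xi|_{F_v}$ to be \emph{unramified}, i.e.\ to come from the hyperspecial parahoric $H^1(\mathcal{O}_{F_v},G)\simeq H^1(\kappa(v),G_{\kappa(v)})$, after which Serre's conjecture~II over $\kappa(v)$ would kill it. You correctly flag this ``Rost trivial $\Rightarrow$ unramified'' implication as the principal obstacle, but you do not supply it, and there is no obvious reason it should hold. The paper does \emph{not} argue this way. Instead it first shows $H^1(F_v^{nr},G)=1$ (this is where the restriction on residue characteristic for $E_8$ enters: the torsion primes of $E_8$ are $2,3,5$, so away from these Steinberg's theorem over the cd~$\leq 1$ field $F_v^{nr}$ applies; for the other types one appeals to the known cases of Serre~II over the cd~$\leq 2$ field $F_v^{nr}$). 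This yields $H^1(F_v,G)=H^1(F_v^{nr}/F_v,G)$. Then Bruhat--Tits theory in Gille's formulation produces, for \emph{some} parahoric $P$ over $\mathcal{O}_{F_v}$ (not necessarily the hyperspecial one), a class $\eta\in H^1(\mathcal{O}_{F_v},P)$ lifting $\xi|_{F_v}$. The reductive quotient $M/\kappa(v)$ of the special fibre of $P$ sits in an exact sequence $1\to\bbG_m\to M'\to M\to 1$ with $M'$ a product of a simply connected group and a direct factor of a quasitrivial torus, and the Rost invariant of $\xi|_{F_v}$, read through the residue map $H^3(F_v,\bbQ/\bbZ(2))\to\Br(\kappa(v))$, coincides with the image of $\eta$ under $H^1(\kappa(v),M)\to H^2(\kappa(v),\bbG_m)$. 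Since $\kappa(v)$ is $p$-adic or a global function field, $H^1(\kappa(v),M')=0$, so this boundary map has trivial kernel; hence Rost-triviality gives $\eta=1$ and thus $\xi|_{F_v}=1$.

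Two further points. First, your account of where the residue-characteristic hypothesis is used is off: it is not needed for Serre~II over $\kappa(v)$ (indeed the paper never invokes Serre~II over $\kappa(v)$; the vanishing $H^1(\kappa(v),M')=0$ comes from Kneser/Bruhat--Tits and Harder for simply connected groups together with Hilbert~90 for the toral part), but rather to ensure $H^1(F_v^{nr},G)=1$ for $E_8$ without knowing Serre~II for $E_8$ over cd~$2$ fields. Second, your proposal omits this preliminary step $H^1(F_v^{nr},G)=1$ altogether; without it, the Bruhat--Tits parahoric description of $H^1(F_v,G)$ is not available.
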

 
 \begin{proof}
As explained in the proof of Lemma \ref{Agroups}, the group $G/A$ is
 automatically quasisplit.

 We may restrict the set of places under consideration to the set of
 points of codimension 1 on a regular proper model ${\cal X}/A$.
 Let $O_{v}$ be the ring of integers in $F_{v}$.
  The residue field $\kappa=\kappa_{v}$ at such a place is either a $p$-adic field
 or a function field in one variable over a finite field.

 Let $F_{v}^{nr}$ be the maximal unramified extension of $F_{v}$.
 At a  prime $l$ different from the
 residue characteristic, the $l$-cohomological
 dimension of $F_{v}^{nr}$ is 1, at the residue characteristic  
it  is at most 2 (\cite[chap. II, \S 4.3, Prop. 12 p. 95]{SCG}).

 From this we deduce $H^1(F_{v}^{nr},G)=1$ 
 if the residue characteristic is not a torsion prime of $G$;
this would hold even if the connected group $G$ was not simply connected, this follows
 from Steinberg's theorem, see \cite[Thm. 4'']{S}.

 In the general case, i.e. when the residue characteristic
 is a torsion prime and the quasisplit group $G$ is simply connected and not of type $E_{8}$, 
 we resort to the known (case by case) theorem that for such a group
 over a field $L$ of characteristic zero and of cohomological dimension 2, 
 we know $H^1(L,G)=1$ (Merkurjev and Suslin, Bayer-Fluckiger and Parimala, Gille,
 Chernousov, see \cite{S} and \cite{GilleconjII}).
Under our hypotheses, we thus have
   $H^1(F_{v}^{nr}/F_{v},G)  \oii H^1(F_{v},G)$.
   
 By Theorem \ref{mainthm}, to prove the theorem
  it is enough to show that for any $v$
as above  the kernel of the map
 $ H^1(F_{v},G) \to H^3(F_{v},\bbQ/\bbZ(2))$
 is trivial.

 Let $\xi \in H^1(F_{v}^{nr}/F_{v},G)$.
 Since $G/A$ is a reductive $A$-group,
 Bruhat-Tits theory, 
 as developed in Gille's paper \cite[Thm. 3', Thm. 4']{Gille}
  shows that there exist  a parahoric subgroup $P$ over $O_{v}$ 
 and a class $\eta \in H^1(O_{v},P)$ with the following properties.
 
 The image of $\eta$ under the natural map $ H^1(O_{v},P) \to  H^1(F_{v}^{nr}/F_{v},G)$
 is $\xi$.

The reductive quotient
 $M/\kappa$ of $P\times_{O_{v}}\kappa$ 
 is part of an exact sequence of  reductive groups   over the field $\kappa$:
 $$1 \to \bbG_{m} \to M' \to M \to 1,$$
 where 
  $M'/\kappa$  is the product of
 a simply connected semisimple group and a torus which is a direct factor
 of a quasitrivial torus.

 Let us consider the composite map
 $$ H^1(F_{v}^{nr}/F_{v},G)   \to {\rm Ker} [H^3(F_{v},\bbQ/\bbZ(2)) \to H^3(F_{v}^{nr},\bbQ/\bbZ(2))]
  \to H^2(\kappa,\bbQ/\bbZ(1)) \oii {\rm Br} (\kappa),$$
 where the map from $H^3$ to $H^2$ is the usual residue map for primes
different from the residue characteristic, and the Kat\^o residue map in general.
  The image of $\xi$ under this composite map  coincides with the image of $\eta$ under the composite map
 $$ H^1(O_{v},P) \oii  H^1(\kappa,P\times_{O_{v}} \kappa) \oii  H^1(\kappa,M) \to H^2(\kappa,\bbG_{m})={\rm Br}(\kappa),$$
 where the map $H^1(\kappa,M) \to H^2(\kappa,\bbG_{m})$ is the boundary map from the above sequence.

Since $\kappa$ is either a $p$-adic field or a function field in one variable over a finite field, 
 $H^1(\kappa,M')=0$. Hence the map $H^1(\kappa,M)  \to {\rm Br}(\kappa)$ has trivial kernel.
 
 If the image of $\xi \in H^1(F_{v},G) = H^1(F_{v}^{nr}/F_{v},G)$ in $H^3(F_{v},\bbQ/\bbZ(2))$ is zero,
 we conclude that $\eta=1$ hence that $\xi=1$.
 \end{proof}

 \section {Appendix}\label{Appendix}

 In this appendix we present a down-to-earth computation   for the phenomenon mentioned
in Remarks \ref{notwithtwo} and \ref{totallysplit}.
Let $p$ be an odd prime.
Let $E$ be the elliptic curve over $\bbQ_p$ defined by the affine equation
$$y^2=x(1-x)(x-p).  \hskip3mm ({\rm A}1)$$
Let $F=\bbQ_{p}(E)$ be its function field.
We clearly have
$$ x+(1-x)=1 ; \hskip1mm x-(x-p)=p ;  \hskip1mm (1-x) + (x-p)=1-p. \hskip3mm ({\rm A}2)$$
Let $A \subset F$ be a proper discrete valuation ring, let
$v$ denote the valuation, $A_{v}$ the completion of $A$ and $F_{v}$
the fraction field of $A_{v} $, i.e. the completion of $F$ at $v$.
Let $k$ be the residue field of $A$.
Let $\pi$ be uniformizing parameter for $A$.
If $v(p)=0$, the field $k$ is a finite extension of $\bbQ_{p}$.
If $v(p) \neq 0$ then $v$ induces on $\bbQ_{p}$ a proper valuation
and $v(p)>0$.
In both cases, $1-p$ is a square in $k$ hence is  a unit which is a square in
$A_{v}$.

\medskip

Claim : {\it The function $1-x$ is not a square in $F$, but it is a square in
each completion $F_v$.}

\medskip

That $(1-x)$ is not a  square is proved by considering the quadratic
extension $F/\bbQ_{p}(x)$ : the kernel of the restriction map on square classes is 
$\bbZ/2$, spanned by the class of $x(1-x)(x-p)$.

Assume $v(1-x) <0$. Then $v(x)=v(1-x)=v(x-p)$ hence (A1) gives that each of these
is even. Let $x=u/\pi^{2n}$ with $u \in A^*$ and $n>0$. From (A1) we get
that $-u^3$, hence $-u$ is a square in $A_{v}$. Now $1-x=( \pi^{2n}-u)/\pi^{2n}$
is a square in $K_{v}$.
Assume $v(1-x)>0$. Then $v(x)=0$ and $v(x-p)=0$. From (A1) we get
$1-x=u\pi^{2n}$ with $u$ a unit and $n>0$. Then from (A2)
we get that $x$ and $x-p$ are squares in $A_{v}$. But then (A1) shows that
$1-x$ is a square in $K_{v}$.
Now assume $v(1-x)=0$.  If $v(x)>0$ or $v(x-p)>0$, then (A2) implies
that $1-x$ is a square in $A_{v}$.
We are reduced to the case where $v(1-x)=v(x)=v(x-p)=0$.
If $v(p)>0$, then $x(x-p)$ is a square in $A_{v}$. From (A1) we deduce
that $1-x$ is a square in $A_{v}$.

We are reduced to the case $v(1-x)=v(x)=v(x-p)=0$, hence $v(y)=0$,
 and $v(p)=0$.
That is, the valuation $v$ corresponds to a closed point  $M$ on the
elliptic curve $E$ over $\bbQ_{p}$ (the trivial valuation is excluded).
The ring $A$ is the local ring of $E$ at that point.
The point $M$ lies on the affine curve
$y^2=x(1-x)(x-p).$
Let $k/\bbQ_{p}$ be the corresponding finite  field extension.
Thus $k$ is the residue field of $A_{v}$. 
Let $B$ be the ring of integers of $k$.
The reductions of $x$ and $y$ modulo the maximal ideal of $A$
give rise to elements  $a,b \in k$ with $b(1-b)(b-p) \neq 0 $
and
$a^2=b(1-b)(b-p) \neq 0$.
The element $1-x$ is a square in $A_{v}$ if and only if $1-b$
is a square in $k$.
To show that this is indeed the case,
we do exactly  the same computations in $k$, with respect to the valuation
$w$ of $k$, which satisfies $w(p)>0$,
as we had done in $F$. The computation is identical, it stops at the analogue of
the end of the previous paragraph.

\bigskip

{\it Acknowledgements} We thank D. Harbater for his remarks on the typescript.
 
 Work for this paper was partially supported by Emory University (J.-L. Colliot-Th\'el\`ene and 
V. Suresh), NSF grant DMS-0653382 (R. Parimala) and Max Planck Institut f\"ur
  Mathematik, 
Bonn (JLCT and RP).

\vskip1cm

\end{document}